\def\shorttitle{\sc Geometry, Topology, QFT and Cosmology}
\def\shortauthor{\sc\leftmark}
\newtheorem{Thm}{Theorem}
\newtheorem{Def}{Definition}
\newtheorem{Lem}{Lemma}
\newtheorem{Prop}{Proposition}
\newtheorem{Cor}{Corollary}
\newtheorem{Q}{Question}
\newtheorem{Ex}{Example}
\begin{document}

\title{Energy-momentum's local conservation laws\\ and  generalized isometric embeddings of\\ vector bundles}

\author{Nabil Kahouadji}

\date{}





\maketitle

\thispagestyle{empty}

\begin{abstract}
\noindent Using exterior differential systems, we provide a positive answer to the generalized isometric embedding problem of vector bundles, and show how conservation laws for a class of PDE can be\linebreak constructed, for instance, for covariant divergence-free energy-\linebreak momentum tensors.
\end{abstract}

\section{Introduction}
Exterior differential systems play an important role in mathematics in general as well as in differential geometry  since they are a way of studying PDE from a geometric viewpoint. After the emergence of abstract notion of manifolds \cite{Gauss-Art} and \cite{Riemann-Art}, which are defined, roughly speaking, as  Hausdorff topological spaces that possess  local embeddings in\linebreak Euclidean spaces (local charts), the question of the existence of an isometric embedding of Riemannian manifolds into Euclidean spaces emerged naturally. Another way to express this question is to find whether any abstract Riemannian manifold is in fact a submanifold of a Euclidean space.  In 1871, Schlaefli \cite{Schlaefli-Art} discussed the local form of this embedding and conjectured that a neighborhood in an $m$-dimensional Riemannian manifold would generally require an embedding space of $m(m+1)/2$ dimensions. Janet \cite{Janet-Art} and Cartan \cite{Cartan-Art} gave a positive answer in the local and analytic case. A global existence of the isometric embedding of Riemannian manifolds in the smooth case was established by Nash \cite{Nash-Art,NashIE-Art} and Kuiper \cite{Kuiper-Art} in the smooth case. A new problem is considered  in \cite{LCLGIEVB-Art} where the author addresses the question of the existence of isometric embedding of vector bundles, in a sense that will be later explained, as a natural generalization of the classical isometric embedding problem and used to construct conservation laws for a class of PDE. For that purpose, the following paper is organized as follows:  in section 2 we recall the basic definitions and results (proofs and details may be found in  \cite{M2Thesis}) concerning connections on vector bundles  that allow us to state in section 3 the problem of constructing conservation laws by generalized isometric embeddings of vector bundles, originally stated by F.~H\'elein (see \cite{Helein-Book}). We also give some examples and motivations of  the problem. In section 4 we give a brief introduction to exterior differential systems and  Cartan-K\"{a}hler theory. In section 5 we state the author's main theorem in \cite{LCLGIEVB-Art} and explain the principal steps of the proof. Finally, in section 6 we present an application for energy-momentum tensors.

\section{On Cartan's Structure Equations}
Let $\xi=(\mathbb{V},\pi,\mathcal{M})$ be a vector bundle over a smooth $m$-dimensional manifold $\mathcal{M}$ with an $r$-dimensional vector space $\mathbb{V}$ as a standard fiber. Denote by $(\Gamma(\mathrm{T}\mathcal{M}), [, ])$ the Lie algebra of vector fields on $\mathcal{M}$ and $\Gamma(\mathbb{V})$ the moduli space of cross-sections of the vector bundle $\mathbb{V}$.

\subsection{Connection on a vector bundles}  
A connection on a vector bundle is a way of ``differentiating'' cross-sections along vector fields, and it is usually defined as a bilinear operator  $\nabla$  on $\Gamma(\mathrm{T}\mathcal{M})\times\Gamma(\mathbb{V})$ with values on $\Gamma(\mathbb{V})$ which satisfy $\nabla_{(fX)}S=f\nabla_{X}S$ and a Leibniz identity type  $\nabla_{X}(fS)=X(f)S+f\nabla_{X}S$ for all $X \in \Gamma(\mathrm{T}\mathcal{M})$ and for all $ S \in \Gamma(\mathbb{V})$.\\

Given a connection $\nabla$, we define its curvature as 
a trilinear operator $\mathcal{R}^{\nabla}$  on $\Gamma(\mathrm{T}\mathcal{M})\times\Gamma(\mathrm{T}\mathcal{M})\times\Gamma(\mathbb{V})$ with values on  $\Gamma(\mathbb{V})$ which associates any cross-section $S$ and any two vector fields $X$ and $Y$ with the cross section
 $$\mathcal{R}^{\nabla}(X,Y)S=\Big([\nabla_{X},\nabla_{Y}]-\nabla_{[X,Y]}\Big)S$$

From the definition, one can easily check the following property:

\begin{Thm}\label{ThCourbureConnexion}
For any $f,g$ and $h$ smooth functions on $\mathcal{M}$, $S\in \Gamma(E)$ a section of $\xi$ and
$X,Y \in \Gamma(\mathrm{T}\mathcal{M})$ two tangent vector fields of  $\mathcal{M}$, we have
\begin{equation}
\mathcal{R}^{\nabla}(fX,gY)(hS)=f.g.h.\mathcal{R}^{\nabla}(X,Y)S
\end{equation}
\end{Thm}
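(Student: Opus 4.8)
The plan is to exploit the multilinearity built into the statement: since $\mathcal{R}^\nabla$ is manifestly additive in each of its three arguments, it suffices to establish $C^\infty(\mathcal{M})$-homogeneity separately in the section slot and in the first vector-field slot,
\begin{gather*}
\mathcal{R}^\nabla(X,Y)(hS) = h\,\mathcal{R}^\nabla(X,Y)S,\\
\mathcal{R}^\nabla(fX,Y)S = f\,\mathcal{R}^\nabla(X,Y)S,
\end{gather*}
and then deduce homogeneity in $Y$ from the antisymmetry $\mathcal{R}^\nabla(X,Y) = -\mathcal{R}^\nabla(Y,X)$, which is immediate from the definition once one notes that $[\nabla_X,\nabla_Y] = -[\nabla_Y,\nabla_X]$ and $[X,Y] = -[Y,X]$. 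The full identity then follows by applying the three scalings in succession,
\begin{align*}
\mathcal{R}^\nabla(fX,gY)(hS) &= f\,\mathcal{R}^\nabla(X,gY)(hS)\\
&= fg\,\mathcal{R}^\nabla(X,Y)(hS) = fgh\,\mathcal{R}^\nabla(X,Y)S.
\end{align*}

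For homogeneity in $S$, I would expand $\nabla_X\nabla_Y(hS)$ and $\nabla_Y\nabla_X(hS)$ by applying the Leibniz identity $\nabla_X(fS) = X(f)S + f\nabla_X S$ twice. Each double expansion produces a second-order term in $h$ (namely $X(Y(h))S$, respectively $Y(X(h))S$), two mixed first-order terms of the form $X(h)\nabla_Y S$ and $Y(h)\nabla_X S$, and the wanted term $h\,\nabla_X\nabla_Y S$. Expanding $\nabla_{[X,Y]}(hS) = [X,Y](h)\,S + h\,\nabla_{[X,Y]}S$ and using $[X,Y](h) = X(Y(h)) - Y(X(h))$ shows that the second-order contributions cancel exactly against this bracket term, while the four mixed terms cancel in antisymmetric pairs, leaving precisely $h\,\mathcal{R}^\nabla(X,Y)S$.

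For homogeneity in $X$, the new ingredient is the Lie-bracket identity $[fX,Y] = f[X,Y] - Y(f)X$. The first property of the connection gives $\nabla_{fX}\nabla_Y S = f\,\nabla_X\nabla_Y S$, whereas the Leibniz rule applied to $\nabla_Y\nabla_{fX}S = \nabla_Y(f\,\nabla_X S)$ produces a spurious term $Y(f)\,\nabla_X S$; this is cancelled exactly by the $-Y(f)X$ piece of $\nabla_{[fX,Y]}S = f\,\nabla_{[X,Y]}S - Y(f)\,\nabla_X S$, so the factor $f$ pulls out cleanly.

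The computations are elementary, so the main obstacle is purely organizational: one must track the Leibniz expansions carefully and verify both that the second-order derivatives $X(Y(h)), Y(X(h))$ annihilate against $[X,Y](h)$ and that the first-order connection terms cancel in pairs. This bookkeeping is exactly what encodes the tensorial nature of curvature --- in contrast to $\nabla$ itself --- namely that $\mathcal{R}^\nabla(X,Y)S$ depends only on the pointwise values of $X$, $Y$ and $S$.
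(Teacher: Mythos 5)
Your proof is correct: the paper itself offers no written proof (it merely notes that the identity ``can easily be checked from the definition'' and defers details to the author's thesis), and your direct Leibniz-expansion argument --- homogeneity in $S$ via cancellation of the $[X,Y](h)$ and mixed first-order terms, homogeneity in $X$ via $[fX,Y]=f[X,Y]-Y(f)X$, and homogeneity in $Y$ by antisymmetry --- is exactly the intended verification. Nothing is missing; the chaining of the three scalings at the end is valid because each homogeneity statement holds for arbitrary smooth arguments in the remaining slots.
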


Depending on the situation,  expressing the connection and its curvature following Cartan's formalism seems to be more convenient: let us  denote by $\mathcal{O}$ an open set of $\mathcal{M}$. A set of $r$ local sections $S=(S_{1},S_{2},\dots , S_{r})$ of $\xi$ is called a moving frame  (or a frame field) if  for all $p $ in $
\mathcal{O}$, $S(p)=\Big(S_{1}(p),S_{2}(p),\dots , S_{r}(p)\Big)$ form a basis of the fiber  $\mathbb{V}_{p}$ over the point $p$.

If we consider $X\in \Gamma(\mathrm{T}\mathcal{M})$ a tangent vector field on  $\mathcal{M}$, then  since $\nabla_{X}S_{j}$ is another section of  $\xi$, it can be expressed in the moving frame $S$ as follows:
\begin{equation}
\nabla_{X}S_{j}=\sum_{i=1}^{r}\omega^{i}_{j}(X)S_{i}
\end{equation}
where $\omega^{i}_{j}\in \Gamma(\mathrm{T}^\ast\mathcal{M})$ are differential 1-forms on $\mathcal{M}$. 
\begin{Def}
The $r\times r$ matrix $\omega=(\omega^{i}_{j})$ whose entries are differential 1-forms is called the  connection 1-form of $\nabla$.
\end{Def}

The connection $\nabla$ is completely determined by the matrix
$\omega=(\omega^{i}_{j})$. Conversely, a matrix of differential 1-forms on $\mathcal{M}$ determines a connection (in a non-invariant way depending on the choice of the moving frame). \\

Let $X,Y \in \Gamma(\mathrm{T}\mathcal{M})$ be two tangent vector fields. As previously, since $\mathcal{R}^\nabla(X,Y)S_{j}$ is a section of $\xi$, it  can be expressed on the moving frame $S$ as follows:
\begin{equation}
\mathcal{R}^\nabla(X,Y)S_{j}=\sum_{i=1}^{r}\Omega^{i}_{j}(X,Y)S_{i}
\end{equation}
where $\Omega^{i}_{j}\in \Gamma(\wedge^{2}\mathrm{T}^\ast\mathcal{M})$ are differential 2-forms on $\mathcal{M}$. 

\begin{Def}
The $r\times r$ matrix $\Omega=(\Omega^{i}_{j})$ whose entries are differential 2-forms, is called the  curvature 2-form of the connection $\nabla$.
\end{Def}

With this viewpoint, we can  state the following theorem that gives the relation between the connection 1-form $\omega$ and  the curvature 2-form $\Omega$.
\begin{Thm}\label{ThFormConnexFormCourbure}
\begin{equation}\label{EquaStructCartan}
\mathrm{d}\omega+\omega\wedge\omega = \Omega \quad\text{(matrix form)}
\end{equation}
or
\begin{equation}\label{2ndequastructureCartan}
\mathrm{d}\omega^{i}_{j}+\sum_{k=1}^{r}\omega^{i}_{k}\wedge\omega^{k}_{j}=\Omega^{i}_{j} \quad\text{(on components)}
\end{equation}
\end{Thm}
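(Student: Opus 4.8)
The plan is to prove Cartan's structure equation by computing the curvature directly in the moving frame and comparing it to the right-hand side of \eqref{EquaStructCartan}. Since Theorem~\ref{ThCourbureConnexion} guarantees that $\mathcal{R}^{\nabla}(X,Y)S_j$ is $C^\infty(\mathcal{M})$-linear in all arguments, it suffices to verify \eqref{2ndequastructureCartan} by evaluating both sides on an arbitrary pair of vector fields $X,Y\in\Gamma(\mathrm{T}\mathcal{M})$; the tensoriality lets me work pointwise and frees me from worrying about derivative terms spoiling the identity.

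First I would expand $\mathcal{R}^{\nabla}(X,Y)S_j=\nabla_X\nabla_Y S_j-\nabla_Y\nabla_X S_j-\nabla_{[X,Y]}S_j$ using the defining relation $\nabla_X S_j=\sum_i\omega^i_j(X)S_i$. Applying $\nabla_X$ to $\nabla_Y S_j=\sum_k\omega^k_j(Y)S_k$ and using the Leibniz rule gives
\begin{equation}
\nabla_X\nabla_Y S_j=\sum_k X\bigl(\omega^k_j(Y)\bigr)S_k+\sum_{k}\omega^k_j(Y)\sum_i\omega^i_k(X)S_i.
\end{equation}
Relabeling indices so that everything is expressed in the basis $S_i$, I would subtract the symmetric term in $(X,Y)$ and the term $\nabla_{[X,Y]}S_j=\sum_i\omega^i_j([X,Y])S_i$. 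Collecting the coefficient of $S_i$ then yields
\begin{equation}
\Omega^i_j(X,Y)=X\bigl(\omega^i_j(Y)\bigr)-Y\bigl(\omega^i_j(X)\bigr)-\omega^i_j([X,Y])+\sum_k\bigl(\omega^i_k(X)\omega^k_j(Y)-\omega^i_k(Y)\omega^k_j(X)\bigr).
\end{equation}

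The final step is to recognize the two pieces of this expression as the coordinate-free evaluations of the standard exterior-calculus operations. The first three terms are exactly the invariant formula for the exterior derivative of a 1-form, $\mathrm{d}\omega^i_j(X,Y)=X(\omega^i_j(Y))-Y(\omega^i_j(X))-\omega^i_j([X,Y])$, and the last sum is precisely $\sum_k(\omega^i_k\wedge\omega^k_j)(X,Y)$ under the convention that $(\alpha\wedge\beta)(X,Y)=\alpha(X)\beta(Y)-\alpha(Y)\beta(X)$. Substituting these identifications gives $\Omega^i_j=\mathrm{d}\omega^i_j+\sum_k\omega^i_k\wedge\omega^k_j$, which is \eqref{2ndequastructureCartan}, and assembling the matrix recovers \eqref{EquaStructCartan}.

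I expect the only genuine subtlety to be bookkeeping: keeping the index placement consistent when the Leibniz rule produces a product of two connection forms, and matching the wedge-product sign convention so that the antisymmetrization coming from $[\nabla_X,\nabla_Y]$ lines up with the intrinsic definitions of $\mathrm{d}$ and $\wedge$. There is no analytic obstacle here—the result is a purely algebraic consequence of the Leibniz rule and the definition of curvature—so the main risk is simply a factor or sign slip, which the tensoriality from Theorem~\ref{ThCourbureConnexion} helps guard against by allowing verification on any convenient local frame or coordinate vector fields.
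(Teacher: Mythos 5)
Your proof is correct: the expansion of $\mathcal{R}^{\nabla}(X,Y)S_j$ via the Leibniz rule, followed by the identification of the first-derivative terms with the invariant formula for $\mathrm{d}\omega^i_j$ and of the quadratic terms with $\sum_k\omega^i_k\wedge\omega^k_j$, is exactly the standard argument, and your sign conventions for $\mathrm{d}$ and $\wedge$ are mutually consistent. The paper itself omits the proof of this theorem (deferring to the cited thesis), so there is nothing to contrast; your computation is the canonical one and the appeal to Theorem~\ref{ThCourbureConnexion} for tensoriality is a legitimate, if not strictly necessary, safeguard since the derivative terms cancel explicitly.
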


This viewpoint is convenient for endowing connection on a vector bundle when we have a connection on another vector bundle. Actually, since connections and their curvatures are expressed by  differential forms, one can use the usual operations on differential forms, for instance, the pull-back, to induce these objects on another vector bundle.  Let us consider $\xi=(\mathbb{V},\pi, \mathcal{M})$ and $\xi
'=(\mathbb{V}',\pi ', \mathcal{M})$ to  be two vector bundles on $\mathcal{M}$ of the same rank.
Consider a map  $f:\mathcal{M}\longrightarrow \mathcal{M}$ and denote 
$\tilde{f}:\mathbb{V}\longrightarrow \mathbb{V}'$ the associated bundle map i.e. $(f, \tilde{f})$ satisfies the following commutative diagram:
$$
\begin{xy}
\xymatrix{
      \mathbb{V} \ar[r]^{\tilde{f}} \ar[d]_{\pi}    &   \mathbb{V}' \ar[d]^{\pi '}\\
      \mathcal{M} \ar[r]_{f}             &   \mathcal{M}}
\end{xy}
$$

If $\nabla '$ is a connection on $\mathbb{V}'$, the vector bundle morphism induces a pull-back connection on $\mathbb{V}$ by 
$ \nabla=\tilde{f}^{\ast}\nabla ' $  such that for any $S'\in \Gamma(\mathbb{V}')$ and $X\in \Gamma(\mathrm{T}\mathcal{M})$, $\nabla_{X}(\tilde{f}^{\ast}S')=(\tilde{f}^{\ast}\nabla')_{X}(\tilde{f}^{\ast}S')=\tilde{f}^{\ast}\Big(\nabla_{f_{\ast}X}S'\Big)$ where $f_{\ast ,p}:\mathrm{T}_{p}\mathcal{M}\longrightarrow \mathrm{T}_{f(p)}\mathcal{M}$ is the linear tangent map.

We can also induce a connection on $\xi$ in another way. 
The connection $\nabla '$ is completely determined by the matrix of differential 
 $1$-forms $\omega'=({\omega '}^{i}_{j})$. Consequently,  the connection $\nabla$ can be defined  by the matrix 
$\omega$ whose entries $\omega^{i}_{j}$ are the pull-back of ${\omega '}^{i}_{j}$ by $\tilde{f}$, i.e., $\omega=\tilde{f}^{\ast}\omega '$. Since the pull-back commutes with the exterior differentiation and with the wedge product\footnote{$\mathrm{d}(f^{\ast}\alpha)=f^{\ast}(\mathrm{d}\alpha)$ and
$f^{\ast}(\alpha\wedge\beta)=f^{\ast}(\alpha)\wedge
f^{\ast}(\beta)$ for all $\alpha , \beta \in \Gamma(\wedge \mathrm{T}^\ast\mathcal{M})$.}, the curvature 2-form  $\Omega$ of the connection $\nabla$  is the pull-back of the curvature 2-form of  $\nabla '$, i.e.
$\Omega=\tilde{f}^{\ast}\Omega '$. \\

An interesting property of a connection when the vector bundle is endowed with a Riemannian metric is to be ``compatible'' with that metric. Let us recall that a Riemannian metric $g$ on $\xi$ is  a positively-defined scalar product on each fiber. Then  
a connection $\nabla$ is said to be compatible with the metric  $g$, or a metric connection,  if  $\nabla$ satisfies  the Leibniz identity:
$\nabla_{X}\Big(g(S_{1},S_{2})\Big)=g(\nabla_{X}S_{1},S_{2})+g(S_{1},
\nabla_{X}S_{2})$ for all  $S_{1},S_{2}\in \Gamma(\mathbb{V})$  and
for all $ X\in \Gamma(\mathrm{T}\mathcal{M}).$ \\

The following result shows an interesting property of metric connections and will be useful in answering one case of  the generalized isometric embedding problem (stated in the next section).

\begin{Prop}\label{PropMatriceConnexAntiSym}
Let $S=(S_{1},S_{2},\dots , S_{n})$ be an orthonormal moving frame with respect to $g$, i.e. $g_{p}(S_{i},S_{j})=\delta_{ij}$ for all
$p\in \mathcal{O}$, $i,j=1,\dots ,r$, then the matrix of  1-forms $\omega$ associated to  $S$  and  the curvature matrix of 2-forms $\Omega$  are both skew-symmetric, i.e., $\omega^{i}_{j}+\omega^{j}_{i}=0$ and $\Omega^{i}_{j}+\Omega^{j}_{i}=0$.
\end{Prop}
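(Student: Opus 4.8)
The plan is to exploit metric compatibility together with the fact that, in an orthonormal frame, the inner products $g(S_i,S_j)=\delta_{ij}$ are constant functions on $\mathcal{O}$. First I would differentiate the identity $g(S_i,S_j)=\delta_{ij}$ along an arbitrary $X\in\Gamma(\mathrm{T}\mathcal{M})$. Since $\nabla_X$ reduces to $X$ on functions and $X(\delta_{ij})=0$, the compatibility Leibniz rule $\nabla_X\big(g(S_i,S_j)\big)=g(\nabla_X S_i,S_j)+g(S_i,\nabla_X S_j)$ collapses to
\[
g(\nabla_X S_i,S_j)+g(S_i,\nabla_X S_j)=0.
\]

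Next I would substitute the defining expansion $\nabla_X S_j=\sum_k\omega^k_j(X)S_k$ and use orthonormality. Because $g(S_i,S_k)=\delta_{ik}$, the first term equals $\sum_k\omega^k_i(X)g(S_k,S_j)=\omega^j_i(X)$ and the second equals $\sum_k\omega^k_j(X)g(S_i,S_k)=\omega^i_j(X)$. The displayed relation therefore reads $\omega^j_i(X)+\omega^i_j(X)=0$ for every $X$, which is exactly $\omega^i_j+\omega^j_i=0$; this settles the skew-symmetry of the connection matrix $\omega$.

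For the curvature matrix I would invoke Cartan's structure equation (Theorem \ref{ThFormConnexFormCourbure}) in component form, $\Omega^i_j=\mathrm{d}\omega^i_j+\sum_k\omega^i_k\wedge\omega^k_j$, and transpose the indices. Writing out $\Omega^j_i$, replacing each entry by its negative via $\omega^a_b=-\omega^b_a$, and using that $1$-forms anticommute under the wedge product ($\alpha\wedge\beta=-\beta\wedge\alpha$), every term picks up exactly one net sign change, so that $\Omega^j_i=-\Omega^i_j$. Alternatively, one may differentiate the compatibility relation a second time to obtain $g(\mathcal{R}^\nabla(X,Y)S_i,S_j)+g(S_i,\mathcal{R}^\nabla(X,Y)S_j)=0$ and read off the same conclusion after expanding $\mathcal{R}^\nabla(X,Y)S_j=\sum_k\Omega^k_j(X,Y)S_k$ in the orthonormal frame.

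The computations are entirely routine; the only point requiring care---hence the main thing to check---is the bookkeeping of signs in the curvature step, where two independent sources of sign change (the skew-symmetry of $\omega$ and the anticommutativity of the wedge product of $1$-forms) must be combined correctly, so that $\mathrm{d}\omega^j_i=-\mathrm{d}\omega^i_j$ and $\sum_k\omega^j_k\wedge\omega^k_i=-\sum_k\omega^i_k\wedge\omega^k_j$ both hold.
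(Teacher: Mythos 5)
Your argument is correct: differentiating $g(S_i,S_j)=\delta_{ij}$ with the metric-compatibility Leibniz rule gives $\omega^j_i(X)+\omega^i_j(X)=0$ for all $X$, and the skew-symmetry of $\Omega$ then follows from the structure equation $\Omega^i_j=\mathrm{d}\omega^i_j+\sum_k\omega^i_k\wedge\omega^k_j$, with the sign bookkeeping you describe (two cancelling signs from $\omega^a_b=-\omega^b_a$ and one surviving sign from the anticommutativity of $1$-forms) working out as claimed. The paper itself defers the proof of this proposition to the reference \cite{M2Thesis}, but what you give is the standard argument and is complete.
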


This means that  metric connections and their curvatures are $\mathfrak{o}(n)$-valued differential forms  rather than just being $\mathfrak{gl}(n)$-valued differential forms.

\subsection{Tangent Bundle Case} We consider in this subsection a fundamental class of vector bundles which is the tangent bundle of a manifold, i.e., $\mathbb{V} = \mathrm{T}\mathcal{M}$.  All of the results stated above obviously remain true. However, this class of vector bundles provides more notions. For instance, let us consider, as previous, a local moving frame $S=(S_{1},
\dots , S_{m})$ over $\mathcal{O}\subset \mathcal{M}$. One can naturally associates  $S$ with a moving coframe $\eta=(\eta^{1},\dots , \eta^{m})$ defined as a local frame field of 1-forms, such that for all $p\in \mathcal{U}$, $\eta^{i}(p)(S_{j})=\delta^{i}_{j}.$\\

We define the torsion 2-form of $\nabla$ to be the $\mathrm{T}\mathcal{M}$-valued differential 2-form $\Theta=(\Theta^{i})$ such that $\Theta^{i}= \mathrm{d}\eta^{i}+\omega^{i}_{j}\wedge\eta^{j}$. This can be written in a more condensed manner and is said to be the first Cartan structure equation
\begin{equation}\label{1ereequaStructureCartan}
\mathrm{d}\eta+\omega\wedge\eta = \Theta
\end{equation}

The torsion of a connection measures the default for a connection to have  the parallelogram property. When $\Theta$ vanishes, the connection is said to be torsion-free. Moreover, a torsion-free connection which is also compatible with a Riemannian metric is said  to be a Levi-Civita connection.  Note that a Levi-Civita connection on a Riemannian manifold is unique.\\

The moving coframe $\eta$, the connection 1-form $\omega$, the curvature 2-form $\Omega$ and the torsion 2-form are connected as follows:

\begin{Prop}\label{PropBianchi} On a tangent bundle, the four forms
$\eta, \omega , \Theta$ and $\Omega$ are connected by the following systems of equations:
\begin{equation}
\mathrm{d}\Theta + \omega\wedge \Theta= \Omega\wedge \eta
\end{equation}
and
\begin{equation}\label{biachi1forme}
\mathrm{d}\Omega = \Omega\wedge \omega - \omega\wedge \Omega
\end{equation}
\end{Prop}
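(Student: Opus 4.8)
The plan is to derive both identities purely formally, by differentiating Cartan's structure equations and invoking $\mathrm{d}^2=0$, while treating $\omega,\eta,\Theta,\Omega$ as matrix- and vector-valued forms and applying the graded Leibniz rule to each wedge product. No geometry beyond the two structure equations \eqref{EquaStructCartan} and \eqref{1ereequaStructureCartan} is needed; everything reduces to bookkeeping of signs and the cancellation of the resulting triple products.

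For the first identity I would start from the first structure equation \eqref{1ereequaStructureCartan}, $\mathrm{d}\eta+\omega\wedge\eta=\Theta$, and apply $\mathrm{d}$. Since $\mathrm{d}^2\eta=0$ and $\omega$ is a matrix of $1$-forms, the graded Leibniz rule gives $\mathrm{d}(\omega\wedge\eta)=\mathrm{d}\omega\wedge\eta-\omega\wedge\mathrm{d}\eta$, so that $\mathrm{d}\Theta=\mathrm{d}\omega\wedge\eta-\omega\wedge\mathrm{d}\eta$. I would then substitute $\mathrm{d}\omega=\Omega-\omega\wedge\omega$ from \eqref{EquaStructCartan} and $\mathrm{d}\eta=\Theta-\omega\wedge\eta$ from \eqref{1ereequaStructureCartan}. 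The two occurrences of the triple product $\omega\wedge\omega\wedge\eta$ (one carrying a sign) then cancel, leaving exactly $\mathrm{d}\Theta=\Omega\wedge\eta-\omega\wedge\Theta$, which rearranges to the claimed first Bianchi identity $\mathrm{d}\Theta+\omega\wedge\Theta=\Omega\wedge\eta$.

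For the second identity I would differentiate the second structure equation \eqref{EquaStructCartan}, $\mathrm{d}\omega+\omega\wedge\omega=\Omega$. Again $\mathrm{d}^2\omega=0$, and the Leibniz rule applied to the matrix product of $1$-forms yields $\mathrm{d}(\omega\wedge\omega)=\mathrm{d}\omega\wedge\omega-\omega\wedge\mathrm{d}\omega$. Substituting $\mathrm{d}\omega=\Omega-\omega\wedge\omega$ once more, the two cubic terms $\omega\wedge\omega\wedge\omega$ cancel, and one is left with $\mathrm{d}\Omega=\Omega\wedge\omega-\omega\wedge\Omega$, which is precisely \eqref{biachi1forme}.

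The step requiring the most care — and the main potential pitfall — is the sign bookkeeping in the graded Leibniz rule for matrix-valued forms. One must remember that $\omega\wedge\omega$ does \emph{not} vanish here: because the entries multiply as matrices, this is a sum of wedges $\omega^i_k\wedge\omega^k_j$ over distinct indices, not a scalar $1$-form wedged with itself. Consequently the noncommutativity of the matrix product is what produces the two separate triple-product terms whose cancellation drives both computations, and it is also why the right-hand side of the second identity appears as a commutator $\Omega\wedge\omega-\omega\wedge\Omega$ rather than as zero. Keeping the matrix ordering and the degree-induced signs consistent throughout is the only genuine obstacle; once that is done, both identities drop out immediately.
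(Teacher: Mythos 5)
Your derivation is correct: both identities follow exactly as you describe by applying $\mathrm{d}$ to the two structure equations, using the graded Leibniz rule with the correct sign for matrix-valued $1$-forms, and cancelling the triple products $\omega\wedge\omega\wedge\eta$ and $\omega\wedge\omega\wedge\omega$. The paper itself omits the proof (deferring to \cite{M2Thesis}), but this is the standard argument and there is nothing to add.
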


The equation (\ref{biachi1forme}) is the expression of the Bianchi identities via the connection  1-form and the curvature 2-form. Let us notice that equation (\ref{biachi1forme}) is also valid on an arbitrary vector bundle.

\subsection{Cartan's Structure Equations}

Let $(\mathcal{M},g)$ be  an $m$-dimensional Riemannian manifold. Let $\eta=(\eta^{1}\!,\eta^{2},$\linebreak $\dots , \eta^{m})$ be  an orthonormal moving coframe on $\mathcal{M}$. According to equations 
(\ref{1ereequaStructureCartan}), (\ref{2ndequastructureCartan}) and the proposition \ref{PropMatriceConnexAntiSym}, we establish the Cartan structure equations:

\begin{equation}
\begin{cases}
\displaystyle{\mathrm{d}\eta^{i}+\sum_{j=1}^{m}\omega^{i}_{j}\wedge\eta^{j}}=0\quad \text{ (torsion-free) }\\
\displaystyle{\mathrm{d}\omega^{i}_{j}+\sum_{k=1}^{m}\omega^{i}_{k}\wedge\omega^{k}_{j}=\Omega^{i}_{j}}
\end{cases}
\end{equation}
where the matrix $(\omega^{i}_{j})$ is the Levi-Civita connection 1-form (torsion-free connection which is compatible with the Riemannian metric $g$). Because $\eta$ is an orthonormal coframe field, $(\omega^{i}_{j})$ and $(\Omega^{i}_{j})$ are skew-symmetric.

\subsection{The Cartan Lemma} We end  this section with a technical lemma, which is  easy to establish and very useful for computations.  
\begin{Lem}\label{LemmedeCartan}
Let $\mathcal{M}$ be an  $m$-dimensional manifold.
$\omega^{1}, \omega^{2}, \dots, \omega^{r}$ a set of linearly independent differential  $1-$forms
 ($r\leq n$) and
$\theta^{1},\theta^{2},\dots , \theta^{r}$ differential  $1$-forms
such that
\begin{equation}
\sum_{i=1}^{r}\theta^{i}\wedge\omega^{i}=0
\end{equation}
then there exist $r^{2}$  functions $h^{i}_{j}$ in $\mathcal{C}^{1}(\mathcal{M})$ such that
\begin{equation}
\theta^{i}=\sum_{j=1}^{r}h^{i}_{j}\omega^{j}\quad \text{ with }
h^{i}_{j}=h^{j}_{i}.
\end{equation}
\end{Lem}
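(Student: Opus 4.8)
The plan is to turn the hypothesis $\sum_{i=1}^{r}\theta^{i}\wedge\omega^{i}=0$ into a pointwise linear-algebra statement by exploiting that a maximal independent family of $1$-forms spans the cotangent space. First I would complete the given forms to a local coframe: since linear independence is an open condition and the $\omega^{i}$ are smooth, on a neighborhood of each point one can choose smooth $1$-forms $\omega^{r+1},\dots,\omega^{m}$ so that $(\omega^{1},\dots,\omega^{m})$ is a coframe field. Then the wedges $\{\omega^{k}\wedge\omega^{l} : k<l\}$ form a pointwise basis of $\wedge^{2}\mathrm{T}^{\ast}\mathcal{M}$, which is the structural fact the whole argument rests on.

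Next I would expand each $\theta^{i}$ in this coframe, writing $\theta^{i}=\sum_{j=1}^{r}h^{i}_{j}\omega^{j}+\sum_{a=r+1}^{m}c^{i}_{a}\omega^{a}$ with coefficient functions $h^{i}_{j}$ and $c^{i}_{a}$ that are $\mathcal{C}^{1}$ (inverting a smoothly varying basis preserves the regularity of the coefficients). Substituting this expansion into the hypothesis yields $0=\sum_{i,j}h^{i}_{j}\,\omega^{j}\wedge\omega^{i}+\sum_{i,a}c^{i}_{a}\,\omega^{a}\wedge\omega^{i}$, where the first sum involves only the wedges $\omega^{k}\wedge\omega^{l}$ with $k,l\leq r$ and the second only the wedges $\omega^{i}\wedge\omega^{a}$ with $i\leq r<a$; crucially, no terms $\omega^{a}\wedge\omega^{b}$ with both indices exceeding $r$ appear, since the hypothesis only wedges the $\theta^{i}$ against the $\omega^{i}$ with $i\leq r$.

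Finally I would read off coefficients against the basis $\{\omega^{k}\wedge\omega^{l}\}_{k<l}$. Collecting the coefficient of $\omega^{k}\wedge\omega^{l}$ with $k<l\leq r$, the contributions from the index pairs $(i,j)=(l,k)$ and $(i,j)=(k,l)$ give $h^{l}_{k}-h^{k}_{l}$, and its vanishing is exactly the desired symmetry $h^{i}_{j}=h^{j}_{i}$. Collecting the coefficient of $\omega^{i}\wedge\omega^{a}$ with $i\leq r<a$ forces $c^{i}_{a}=0$, which removes the components of $\theta^{i}$ outside $\mathrm{span}(\omega^{1},\dots,\omega^{r})$ and leaves precisely $\theta^{i}=\sum_{j=1}^{r}h^{i}_{j}\omega^{j}$. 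Since the $\omega^{j}$ are independent, these $h^{i}_{j}$ are uniquely determined, so the locally constructed functions agree on overlaps and patch to global $\mathcal{C}^{1}$ functions on $\mathcal{M}$.

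The computation is routine once the coframe is in place, so there is no genuine obstacle; the only points demanding care are the sign bookkeeping when a repeated index pair feeds into the same basis $2$-form $\omega^{k}\wedge\omega^{l}$, and the verification that the completion to a coframe can be carried out smoothly with $\mathcal{C}^{1}$ coefficients. I would therefore devote the bulk of the written proof to the expansion step and to cleanly isolating the two families of coefficients.
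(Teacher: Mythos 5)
Your argument is correct and is the standard proof of the Cartan Lemma: complete to a local coframe, expand the $\theta^{i}$, and read off coefficients of the basis $2$-forms $\omega^{k}\wedge\omega^{l}$ to get both the vanishing of the components outside $\mathrm{span}(\omega^{1},\dots,\omega^{r})$ and the symmetry $h^{i}_{j}=h^{j}_{i}$. The paper itself states this lemma without proof (deferring to a reference), but your route is exactly the expected one and the patching remark at the end is justified since the $h^{i}_{j}$ are uniquely determined by the independence of the $\omega^{j}$.
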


\section[A new problem: generalized isometric embeddings]{A new problem: generalized isometric\\ embeddings}

Many fundamental quantities in physics (mass, energy, movement quantity, momentum, electric charge, \dots), when some conditions are satisfied, do not change as the physical system evolves. One can then consider that there are \textit{conservation laws} that govern the evolution of a given physical system. From a mathematical viewpoint, a conservation law can be seen as a  map defined on a space $\mathcal{F}$ (which can be, for instance, a  function space, a fiber bundle section space, \dots) that associates each  element $f$ of $\mathcal{F}$ with a vector field $X$ on an $m$-dimensional Riemannian manifold $\mathcal{M}$, such that if $f$ is a solution of  a given PDE on $\mathcal{F}$, the vector field $X$ has a vanishing divergence. If we denote  by $g$ the Riemannian metric on the manifold  $\mathcal{M}$, we can canonically associate each vector field $X\in \Gamma(\mathrm{T}\mathcal{M})$ with a differential 1-form $\alpha_{X}: = g(X,\cdot)$. Since $\mathrm{div}(X)= \ast \mathrm{d} \ast \alpha_{X}$  (or $\mathrm{div}(X)\mathrm{vol}_{\mathcal{M}}= \mathrm{d}(X \lrcorner \, \mathrm{vol}_{\mathcal{M}}))$, 
 where $\ast$ is the Hodge operator, $\mathrm{vol}_{\mathcal{M}}$ is the volume form on $\mathcal{M}$, and $X\lrcorner \, \mathrm{vol}_{\mathcal{M}}$ is the interior product of $\mathrm{vol}_{\mathcal{M}}$ by the vector field $X$, the requirement $\mathrm{div}(X)=0$ may be replaced by the requirement  $\mathrm{d}(X \lrcorner \, \mathrm{vol}_{\mathcal{M}}) = 0$ and hence, conservation laws may also be seen   as  maps on $\mathcal{F}$ with values on  differential $(m-1)$-forms such that solutions of PDEs are mapped to closed differential $(m-1)$-forms on $\mathcal{M}$. More generally, we could extend the notion of conservation laws as mapping to differential $p$-forms (for instance, Maxwell equations in the vacuum can be expressed, as it is well-known, by requiring a system of differential 2-forms to be closed). As in \cite{LCLGIEVB-Art}, we address the question of finding conservation laws for a class of PDE described as follows:

\begin{Q} 
Let   $\mathbb{V}$  be  an $n$-dimensional vector bundle over $\mathcal{M}$. Let $g$ be a metric bundle  and $\nabla$ a connection that is compatible with that metric. We then have a covariant derivative $d_{\nabla}$ acting on vector valued differential forms. Assume that $\phi$ is a given  covariantly closed $\mathbb{V}$-valued differential $p$-form on $\mathcal{M}$, i.e.,
\begin{equation}\label{dnablaphi=0}
\mathrm{d}_{\nabla}\phi = 0.
\end{equation}
Does there exist $N\in \mathbb{N}$ and an embedding $\Psi$ of $\mathbb{V}$ into $\mathcal{M}\times \mathbb{R}^N$ given by $\Psi(x,X)=(x,\Psi_{x}X)$, where $\Psi_{x}$ is a linear map from $\mathbb{V}_{x}$ to $\mathbb{R}^N$ such that:
\begin{itemize}
\item $\Psi$ is isometric, i.e, for every $x\in \mathcal{M}$, the map $\Psi_{x}$ is an isometry,
\item If $\Psi(\phi)$ is the image of $\phi$ by $\Psi$, i.e., $\Psi(\phi)_{x} = \Psi_{x}\circ \phi_{x}$ for all $x\in \mathcal{M}$, then
\begin{equation}\label{dpsiphi=0}
\mathrm{d}\Psi(\phi)=0.
\end{equation}
\end{itemize}
\end{Q}

In this problem, the equation (\ref{dnablaphi=0}) represents  the given PDE (or a system of PDEs) and the map $\Psi$ plays the role of a conservation law. Note that the problem is trivial when the vector bundle is a line bundle. Indeed, the only connection on a real line bundle which is compatible with the metric is th flat one.

\subsection{Motivations:}
There are basically two main motivations to the statement of the above problem(see \cite{Helein-Book}) coming from geometry and physics:
\subsubsection{The isometric embedding problem:}

A fundamental example is the isometric embedding of Riemannian manifolds in Euclidean spaces. A map $u$ between two Riemannian manifolds $(\mathcal{M},g)$ and $(\mathcal{N},h)$ is said to be isometric if $u^\ast (h) =g$. After the emergence of the abstract notion of manifolds, due to the works of Gauss \cite{Gauss-Art} and Riemann\cite{Riemann-Art}, a natural question arose: does there exist an abstract manifold? Another way to express this question is to know if any given abstract manifold is in fact a submanifold of a certain Euclidean space or equivalently, does any arbitrary Riemannian manifold admit  an isometric embedding in a Euclidean space.  This problem is known as the isometric embedding problem which has been considered in various specializations and with assorted conditions. It is related to Question 1 as follows: $\mathcal{M}$ is an $m$-dimensional  Riemannian manifold, $\mathbb{V}$ is the tangent bundle $\mathrm{T}\mathcal{M}$, the connection $\nabla $ is the Levi-Civita connection, $p=1$  and the $\mathrm{T}\mathcal{M}$-valued  differential 1-form $\phi$ is the identity map on $\mathrm{T}\mathcal{M}$. Then  (\ref{dnablaphi=0}) expresses the torsion-free condition for the connection $\nabla$  and any solution $\Psi$ of (\ref{dpsiphi=0}) provides us with an isometric embedding $u$ of the\linebreak Riemannian manifold $\mathcal{M}$ into a Euclidean space $\mathbb{R}^N$ through $\mathrm{d}u=\mathrm{d}\Psi(\phi)$ and conversely. An answer to the local analytic isometric embeddings of Riemannian manifolds is given by the Cartan-Janet theorem:  

\begin{Thm}
Every $m$-dimensional real analytic Riemannian manifold can be
locally embedded isometrically in an
$\displaystyle{\frac{m(m+1)}{2}}$-dimensional\linebreak Euclidean space.
\end{Thm}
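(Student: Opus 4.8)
\emph{Proof idea.} The plan is to recast the local isometric embedding as the search for an integral manifold of an analytic exterior differential system and then to invoke the Cartan--K\"ahler theorem, whose hypotheses the real-analyticity of $(\mathcal{M},g)$ is tailored to meet. Set $N=m(m+1)/2$, so that the codimension is $n=N-m=m(m-1)/2$, and let $\eta=(\eta^1,\dots,\eta^m)$ be an orthonormal moving coframe on $\mathcal{M}$, with Levi-Civita forms $\omega^i_j$ and curvature $\Omega^i_j$ governed by Cartan's structure equations (\ref{1ereequaStructureCartan}) and (\ref{2ndequastructureCartan}). I would work on $\mathbb{R}^N$ with a moving orthonormal frame $(e_A)_{A=1}^N$, whose dual coframe $\theta^A$ and connection forms $\theta^A_B=-\theta^B_A$ satisfy the flat structure equations $\mathrm{d}\theta^A+\sum_B\theta^A_B\wedge\theta^B=0$ and $\mathrm{d}\theta^A_B+\sum_C\theta^A_C\wedge\theta^C_B=0$. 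A local isometric embedding $u$, together with an adapted frame in which $e_1,\dots,e_m$ are tangent and $e_{m+1},\dots,e_N$ normal, is exactly an $m$-dimensional integral manifold of the system on which the normal coframe vanishes ($\theta^\mu=0$ for $\mu>m$) and the tangential coframe $\theta^i$ agrees with $\eta^i$; since the induced tangential connection $\theta^i_j$ is then metric and torsion-free, it is forced by the uniqueness of the Levi-Civita connection to coincide with $\omega^i_j$.

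First I would build the manifold $\Sigma$ on which the ideal lives, namely the orthonormal coframe bundle of $(\mathcal{M},g)$ augmented by the remaining ambient frame data, and generate the differential ideal $\mathcal{I}$ by the $1$-forms $\theta^\mu$, closed up under $\mathrm{d}$. Differentiating $\theta^\mu=0$ and using flatness gives, modulo $\mathcal{I}$, the relation $\sum_i\theta^\mu_i\wedge\eta^i=0$, so Cartan's Lemma \ref{LemmedeCartan} produces symmetric functions $h^\mu_{ij}=h^\mu_{ji}$ with $\theta^\mu_i=\sum_j h^\mu_{ij}\,\eta^j$; these are the components of the second fundamental form and become the essential fibre variables of the system. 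Substituting into the structure equation for $\mathrm{d}\theta^i_j$, using $\theta^i_\mu=-\theta^\mu_i$ (Proposition \ref{PropMatriceConnexAntiSym}) and comparing with (\ref{2ndequastructureCartan}), yields the Gauss equation $\Omega^i_j=\sum_\mu\theta^\mu_i\wedge\theta^\mu_j$, while the equations for $\mathrm{d}\theta^\mu_i$ and $\mathrm{d}\theta^\mu_\nu$ produce the Codazzi and Ricci equations. These are precisely the integrability conditions the system carries.

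The crux is to prove that this system is \emph{involutive}. I would linearize at an integral element to read off the tableau, compute Cartan's characters $s_1\ge s_2\ge\cdots\ge s_m$, and verify Cartan's test, i.e. that the dimension of the space of admissible integral elements equals $s_1+2s_2+\cdots+m\,s_m$. The dimension count is decisive: with codimension $n=m(m-1)/2$ there are exactly enough variables $h^\mu_{ij}$ for the Gauss system to be solvable at a point with sufficient residual freedom, and it is precisely at $N=m(m+1)/2$ that Cartan's test is met with equality. This involutivity computation, together with the generic solvability of the algebraic Gauss equation that supplies a starting integral element, is where I expect the real work and the main obstacle to lie; the delicacy is in choosing the flag of integral subspaces and confirming that the reduced characters drop at the predicted rate, rather than in any single manipulation.

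Finally, because $(\mathcal{M},g)$ is real-analytic the forms $\eta^i$, $\omega^i_j$, $\Omega^i_j$, and hence the whole ideal $\mathcal{I}$, are analytic; an involutive analytic system admits, by the Cartan--K\"ahler theorem, an $m$-dimensional analytic integral manifold through each point. Reading off $\mathrm{d}u=\sum_i\eta^i\,e_i$ from this integral manifold recovers a map $u$ into $\mathbb{R}^N$ with $u^\ast(\text{Euclidean metric})=\sum_i(\eta^i)^2=g$, that is, a local isometric embedding into $\mathbb{R}^{m(m+1)/2}$. I would close by stressing that analyticity is not a convenience but essential: Cartan--K\"ahler rests on the Cauchy--Kovalevskaya theorem, so the smooth case demands entirely different, Nash-type, techniques.
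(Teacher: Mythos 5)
Your overall architecture is the right one, and it is essentially the classical route of Cartan: recast the embedding as an integral manifold of an analytic exterior differential system built from the flat structure equations of $\mathbb{R}^N$, use Cartan's Lemma \ref{LemmedeCartan} to introduce the symmetric coefficients $h^\mu_{ij}$ of the second fundamental form, read off the Gauss equation as the obstruction carried by the closed ideal, and conclude by Cartan--K\"ahler. Be aware, though, that the paper does not prove this theorem at all --- it states it as a classical result of Janet and Cartan --- and what it does sketch, in Section 5, is the proof of the generalization (Theorem \ref{CLGIEVBThm}); your sketch is in effect that strategy specialized to $\mathbb{V}=\mathrm{T}\mathcal{M}$, $p=1$, $\phi=\mathrm{Id}_{\mathrm{T}\mathcal{M}}$.

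The genuine gap is that the two steps you yourself flag as ``where the real work lies'' constitute essentially the entire mathematical content of the theorem, and you carry out neither. First, you must show that the algebraic Gauss equation $\sum_\mu(h^\mu_{ik}h^\mu_{jl}-h^\mu_{il}h^\mu_{jk})=\mathcal{R}^{i}_{j;kl}$ is solvable for every curvature tensor satisfying the first Bianchi identity, and that over a suitable open locus of nondegenerate $h$ the solution set is a nonempty submanifold of the predicted dimension --- i.e.\ that the Gauss map restricted to symmetric $h$ is a surjective submersion onto the $\frac{m^2(m^2-1)}{12}$-dimensional curvature space. This is a nontrivial algebraic lemma (Cartan's result on exteriorly orthogonal quadratic forms), not a dimension count; it is exactly the role played by Lemma \ref{LemFond} in the paper's generalized setting, and it is what supplies the integral elements in the first place. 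Second, you must actually exhibit a flag, compute the polar spaces and the characters $C_0,\dots,C_{m-1}$ via Proposition \ref{PropCp}, and verify the equality demanded by Cartan's test (Theorem \ref{TestCartan}); only then is the flag ordinary and Corollary \ref{CorCartanKahler} applicable. Asserting that ``it is precisely at $N=m(m+1)/2$ that Cartan's test is met with equality'' states the conclusion rather than proving it. A minor further imprecision: since $\theta^\mu_i$ and $\theta^\mu_\nu$ are not generators of your ideal, their exterior derivatives need not be adjoined; the Codazzi and Ricci equations appear as identities satisfied along integral manifolds, not as additional generators of $\mathcal{I}$.
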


Janet \cite{Janet-Art} solved the local problem for an analytic Riemannian surface, and Cartan \cite{Cartan-Art} immediately extended the result to any $m$-manifold, treating it as an application to his theory of Pfaffian systems.  In \cite{NashIE-Art}, Nash solved the problem in the smooth and global case. Despite the fact that  the Cartan--Janet result is local and the analycity hypotheses on the  data may seem to be too restrictive, the Cartan--Janet theorem is important because it  actualizes the embedding in an optimal dimension,  unlike the Nash isometric embedding. 

Consequently, if the above problem has a positive answer for $p=1$, the notion of isometric embeddings of Riemannian manifolds is extended to the notion  of \textit{generalized isometric embeddings} of vector bundles. The general problem, when $p$ is arbitrary, can also be viewed as an \textit{embedding of covariantly closed vector valued differential $p$-forms}.

\subsubsection{Harmonic maps between Riemannian manifolds:}  A harmonic map $u$ between two Riemannian manifolds $(\mathcal{M},g)$ and $(\mathcal{N}, h)$ is a critical point of the Dirichlet functional
\begin{equation}
E[u]=\frac{1}{2}\int_{\mathcal{M}}|\mathrm{d}u|^2.
\end{equation}
Locally, the Euler-Lagrange systems  is expressed\footnote{We use here the usual convention on repeated indices.} by
\begin{equation}
\Delta_{g}u^{i} + g^{\alpha\beta}\Gamma^{i}_{jk}(u(x))\frac{\partial u^{j}}{\partial x^\alpha}\frac{\partial u^{j}}{\partial x^\alpha}=0 
\end{equation}
where $\Gamma^{i}_{jk}$ denote the Christoffel symbols of the connection on $\mathcal{N}$. Harmonic maps are in fact common to mathematicians and physicists. For instance,  when the target Riemannian manifold $(\mathcal{N},h)$ is replaced by $(\mathbb{R}, \langle ,\rangle_{\mathbb{R}})$, harmonic maps are harmonic functions on $(\mathcal{M},g)$. If the target manifold is $(\mathbb{R}^n, \langle,\rangle_{\mathbb{R}^n})$,  a map $u$ is harmonic if and only if each component of $u$ is a harmonic function on $\mathcal{M}$. Other examples of harmonic maps are isometries, geodesics and isometric immersions.

Harmonic maps are related to  Question 1  since they produce covariantly closed vector valued forms as expounded in \cite{Helein-Book}.  Indeed, harmonic maps can be characterized as follows:  let us consider a map $u$ defined on an $m$-dimensional Riemannian manifold $\mathcal{M}$ which takes values in an $n$-dimensional Riemannian manifold $\mathcal{N}$.  On the induced bundle by $u$ over $\mathcal{M}$, the $u^{\ast}\mathrm{T}\mathcal{N}$-valued differential  $(m-1)$-form $\ast \mathrm{d}u$ is covariantly closed if and only if the map $u$ is harmonic, where the connection on the induced bundle is the pull-back by $u$ of the Levi-Civita connection on $\mathcal{N}$.  A positive answer to Question 1 in this case will make it possible to construct conservation laws on $\mathcal{M}$ from covariantly closed  vector valued differential $(m-1)$-forms, provided, for example, by harmonic maps. In his book \cite{Helein-Book}, motivated by the question of the compactness of weakly harmonic maps in Sobolev spaces in the weak topology (which is still an open question), H\'elein considers harmonic maps between\linebreak Riemannian manifolds and explains how conservation laws may be obtained explicitly by   Noether's theorem if the target manifold is symmetric and formulates the problem for non-symmetric target manifolds.  
\\

\section{A short review of exterior differential systems}

In \cite{LCLGIEVB-Art}, the author gives a  positive answer  to the \textit{generalized isometric embedding} of an arbitrary vector bundle relative to a covariantly closed vector valued differential $(m-1)$-form in the local and analytic case, by reformulating the  problem in terms of an exterior differential system. Since the Cartan--K\"{a}hler theory plays an important role in the proof of Theorem \ref{CLGIEVBThm} and since the reader  may not be  familiar with exterior differential systems (EDS) and the Cartan--K\"{a}hler theorem, generalities are expounded concerning theses notions and results in this section. For details and proofs, the reader may consult \'Elie Cartan's book \cite{Cartan-Book} and the third chapter  of \cite{ExtDiffSys-Book}.

\subsection{EDS and exterior ideals}
Let us denote\footnote{This is a graded algebra under the wedge product. We do not use the standard notation $\Omega(\mathcal{M})$ so as to not confuse it with the curvature 2-form of the connection.} by $\Gamma (\wedge \mathrm{T}^{\ast}\mathcal{M})$ the space of smooth differential forms on $\mathcal{M}$. An exterior differential system is a finite set of differential forms $I\!=\!\{\omega_{1},\omega_{2},\dots , \omega_{k}\!\} \subset
\Gamma (\wedge \mathrm{T}^{\ast}\mathcal{M})$ for which we associate the set of equations $\{\omega_{i}=0 \,  | \, \omega_{i}\in I \}$.

\begin{Ex} $ I=\{ a\mathrm{d}x + b\mathrm{d}y +c\mathrm{d}z \}$ is an EDS on $\mathbb{R}^{3}$, where $a,b$ and $c$ are functions on $\mathbb{R}^3$. The EDS $I$ is said to be a Pfaffian system because it  contains only differential 1-form.
\end{Ex}

A subset of differential forms $\mathcal{I} \subset\Gamma (\wedge \mathrm{T}^{\ast}\mathcal{M})$  is an exterior ideal if the exterior product of any differential form of $\mathcal{I}$ by a differential form of $\Gamma (\wedge \mathrm{T}^{\ast}\mathcal{M})$ belongs to $\mathcal{I}$ and if the sum of any two differential forms of the same degree belonging to $\mathcal{I}$, belongs also to $\mathcal{I}$.
The exterior ideal generated by $I$ is  the smallest exterior ideal containing $I$. 

\begin{Ex}(continued)
Let us denote $\omega = a\mathrm{d}x + b\mathrm{d}y + c\mathrm{d}z$. The exterior ideal generated by $I$ is:
\begin{equation}
\mathcal{I} = \{ \alpha \wedge \omega | \alpha \in\Gamma (\wedge \mathrm{T}^{\ast}\mathbb{R}^3)\}
\end{equation}

\end{Ex}

An exterior ideal is said to be   an exterior differential ideal if it is closed under the exterior differentiation and hence,  the
exterior differential ideal generated by an EDS is the smallest
exterior differential ideal containing the EDS.
\begin{Ex}(continued) The exterior differential ideal generated by $I$ is:
\begin{equation}
\mathcal{I}_{1}=\{ \alpha\wedge \omega + \beta \wedge \mathrm{d}\omega | \alpha, \beta \in  \Gamma (\wedge \mathrm{T}^{\ast}\mathbb{R}^3)\} 
\end{equation}

\end{Ex}

 Let us notice that an EDS is closed if and only if the exterior differential ideal and the exterior ideal  generated by that EDS are  equal. In particular, if $I$ is an EDS,  $I\cup \mathrm{d}I$ is closed.\\

Let $I\subset\Gamma (\wedge \mathrm{T}^{\ast}\mathcal{M})$ be an EDS on $\mathcal{M}$ and let $\mathcal{N}$ be a submanifold of $\mathcal{M}$. The submanifold $\mathcal{N}$ is an integral manifold of  $I$ if  $\iota^{\ast}\varphi=0, \forall \varphi\in
I$, where $\iota$ is an embedding $\iota:\mathcal{N}\longrightarrow \mathcal{M}$. The purpose of this theory is to establish when a given EDS, which represents a PDE in a jet space, has or has not integral manifolds. We consider in this subsection, an $m$-dimensional real manifold $\mathcal{M}$ and $\mathcal{I}\subset \Gamma (\wedge \mathrm{T}^{\ast}\mathcal{M})$ an exterior differential ideal on $\mathcal{M}$.

An EDS which contains only differential $1$-forms is called a Pfaffian system. A necessary and sufficient condition to the existence of integral manifold for Pfaffian systems is given by the following theorem:

\begin{Thm}(Frobenius)\footnote{Note that the Frobenius  theorem may be stated using distribution of vector fields. Classical details may be found in \cite{M2Thesis}.} 
\\
Let $I=\{\omega^1 , \dots , \omega^r\}$ be a Pfaffian system on an $m$-dimensional manifold $\mathcal{M}$. Then a necessary and sufficient condition for the Pfaffian system to be completely integrable is:
\begin{equation}
\mathrm{d}\omega^{i}\wedge\omega^1\wedge\dots\wedge\omega^r = 0 \quad \text{for all}\quad i=1,\dots , r.
\end{equation}
\end{Thm}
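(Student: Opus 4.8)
The plan is to prove the Frobenius theorem by establishing the two implications separately, relying on the Cartan Lemma (Lemma \ref{LemmedeCartan}) and the first Cartan structure equation machinery developed above. First I would fix notation: since $\omega^1,\dots,\omega^r$ are pointwise linearly independent 1-forms, I complete them to a coframe $\omega^1,\dots,\omega^r,\pi^{r+1},\dots,\pi^m$ on an open set, so that every differential form can be written in this basis. Complete integrability means that through each point there passes an $(m-r)$-dimensional integral manifold $\mathcal{N}$ on which all $\omega^i$ pull back to zero; equivalently, the exterior ideal $\mathcal{I}$ generated by $I$ is an exterior differential \emph{ideal}, i.e.\ $\mathrm{d}\omega^i \in \mathcal{I}$ for each $i$.

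For the necessity ($\Rightarrow$) direction, I would assume the system is completely integrable and show the curvature-type condition holds. If $\mathcal{N}$ is an integral manifold through a given point with embedding $\iota$, then $\iota^\ast\omega^i = 0$ for all $i$, and since pull-back commutes with exterior differentiation we also get $\iota^\ast(\mathrm{d}\omega^i) = \mathrm{d}(\iota^\ast\omega^i) = 0$. Expanding $\mathrm{d}\omega^i$ in the coframe shows that its restriction to the annihilator distribution of the $\omega$'s vanishes, which forces $\mathrm{d}\omega^i = \sum_j \theta^i_j \wedge \omega^j$ for some 1-forms $\theta^i_j$; wedging with $\omega^1\wedge\dots\wedge\omega^r$ then kills every term and yields $\mathrm{d}\omega^i\wedge\omega^1\wedge\dots\wedge\omega^r = 0$. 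This direction is essentially bookkeeping.

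For the sufficiency ($\Leftarrow$) direction, I would assume $\mathrm{d}\omega^i\wedge\omega^1\wedge\dots\wedge\omega^r=0$ for all $i$ and produce integral manifolds. The first step is algebraic: the vanishing condition means that $\mathrm{d}\omega^i$ lies in the ideal generated by $\omega^1,\dots,\omega^r$, so I can write $\mathrm{d}\omega^i = \sum_{j=1}^{r}\theta^i_j\wedge\omega^j$ for suitable 1-forms $\theta^i_j$ (here the Cartan Lemma, or a direct coframe expansion, guarantees such a representation). This shows $\mathcal{I}$ is closed under $\mathrm{d}$, so $\mathcal{I}$ is an exterior differential ideal. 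The second step is to convert this algebraic closure into the existence of integral submanifolds, which is where the genuine analytic content lives.

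The hard part will be this second step: passing from the algebraic integrability condition to the actual existence of the foliation by integral manifolds. I expect the cleanest route is induction on $r$, straightening out one 1-form at a time: after a change of coordinates one arranges $\omega^r$ to be (a multiple of) an exact form $\mathrm{d}x^m$, restricts attention to the level sets $\{x^m = \text{const}\}$, and checks that the remaining forms $\omega^1,\dots,\omega^{r-1}$ restrict to a Pfaffian system of lower rank still satisfying the Frobenius condition, so the inductive hypothesis applies. The base case $r=1$ reduces to the classical fact that a nonvanishing 1-form $\omega$ with $\mathrm{d}\omega\wedge\omega = 0$ admits an integrating factor, i.e.\ $\omega = \lambda\,\mathrm{d}f$ locally. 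Alternatively, one may invoke the dual formulation in terms of involutive distributions and the flow-box argument, but since the statement here is phrased entirely in the language of differential forms, I would keep the proof on the form side and cite \cite{M2Thesis} for the equivalence with the distribution version and for the detailed coordinate computations.
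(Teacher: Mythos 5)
The paper itself offers no proof of this statement: the Frobenius theorem appears in the review section as a classical result, with the footnote deferring to \cite{M2Thesis} (and the surrounding text to \cite{Cartan-Book} and \cite{ExtDiffSys-Book}) for details, so your proposal can only be judged on its own merits. The necessity direction is correct, as is the algebraic half of sufficiency: expanding $\mathrm{d}\omega^i$ in the completed coframe $\omega^1,\dots,\omega^r,\pi^{r+1},\dots,\pi^m$, the hypothesis kills exactly the $\pi^\alpha\wedge\pi^\beta$ coefficients, so $\mathrm{d}\omega^i=\sum_j\theta^i_j\wedge\omega^j$ and the generated ideal is differentially closed. (A small quibble: this is a direct coframe expansion, not an application of Lemma \ref{LemmedeCartan}, whose hypothesis $\sum_i\theta^i\wedge\omega^i=0$ has a different shape; you correctly offer the coframe expansion as the alternative.)

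The genuine gap is in the inductive step of the sufficiency direction. You propose to ``arrange $\omega^r$ to be a multiple of an exact form $\mathrm{d}x^m$ after a change of coordinates.'' But $\omega^r=\lambda\,\mathrm{d}x^m$ forces $\mathrm{d}\omega^r\wedge\omega^r=0$, a diffeomorphism-invariant condition on the single form $\omega^r$, whereas the Frobenius hypothesis only gives the much weaker $\mathrm{d}\omega^r\wedge\omega^1\wedge\dots\wedge\omega^r=0$. Concretely, on $\mathbb{R}^4$ the system $\{\omega^1,\omega^2\}=\{\mathrm{d}x^1+x^3\mathrm{d}x^2,\ \mathrm{d}x^2\}$ is completely integrable (it generates the same ideal as $\{\mathrm{d}x^1,\mathrm{d}x^2\}$), yet $\mathrm{d}\omega^1\wedge\omega^1=-\mathrm{d}x^3\wedge\mathrm{d}x^1\wedge\mathrm{d}x^2\neq 0$, so no change of coordinates turns $\omega^1$ into $\lambda\,\mathrm{d}f$. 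To make your induction run you would first have to replace the given generators by combinations $\tilde\omega^i=\sum_j g^i_j\omega^j$ each admitting an integrating factor --- but producing even one such combination is essentially the content of the theorem, so the argument as sketched is circular. The standard repairs are exactly the alternatives you mention only in passing: either pass to the dual involutive distribution and use the flow-box theorem (inducting on the rank of the distribution via commuting flows), or normalize the generators to solved form $\omega^i=\mathrm{d}x^i+\sum_{\alpha>r}p^i_\alpha\,\mathrm{d}x^\alpha$ and reduce to an overdetermined system of ODEs whose compatibility condition is precisely the Frobenius condition. One of these must be carried out in full; the ``one form at a time'' straightening as stated cannot be.
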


\begin{Ex}(continued)
The necessary and sufficient condition for the existence of integral surfaces for $I$ is:
\begin{equation}
c\Big(\frac{\partial b}{\partial x} - \frac{\partial a}{\partial y }\Big) + b\Big(\frac{\partial c }{\partial x} - \frac{\partial a}{\partial z}\Big)+ a\Big(\frac{\partial c }{\partial y} - \frac{\partial b}{\partial z }\Big)=0.
\end{equation}
For instance, 
\begin{enumerate}
\item $I_{1}=\{ \mathrm{d}z - x\mathrm{d}y\}$ is not completely integrable in $\mathbb{R}^3$.
\item $I_{2}=\{ x\mathrm{d}x+y\mathrm{d}y+z\mathrm{d}z\}$ is completely integrable in $\mathbb{R}^3\smallsetminus\{ 0\}$.
\item $I_{3}=\{ z\mathrm{d}x+x\mathrm{d}y+y\mathrm{d}z\}$ is not completely integrable in $\mathbb{R}^3\smallsetminus\{ 0\}$.
\end{enumerate}
\end{Ex}

If an EDS contains differential 1-forms and functions, we can still apply the Frobenius theorem on the submanifold defined on the vanishing of these functions (outside the possible singularities). However, if the EDS contains differential forms of degree greater than 1, the Frobenius theorem is not helpful. For instance, let $\Omega$ be a closed differential 2-form on an $2m$-dimensional manifold $\mathcal{M}$  such that $\Omega^{m}\neq 0$ and $\Omega^{m+1} = 0$. The couple $(\mathcal{M}, \Omega)$ is called a \textit{symplectic manifold}. The integral $m$-dimensional manifolds of $\{ \Omega\}$, if they exist, are called Lagrangian manifolds. Finding then Lagrangian manifolds for a giving symplectic manifold is the same as searching the existence of integral manifolds of a differential 2-form. The following theory represents a powerful tool to answer this  question.\\

\subsection{Introduction to Cartan--K\"{a}hler theory}

\begin{Def} Let $M\in \mathcal{M}$. A linear subspace  $E$ of $ \mathrm{T}_{M}\mathcal{M}$ is an integral element of $\mathcal{I}$ if
$\varphi_{E}=0$ for all $\varphi \in \mathcal{I}$, where $\varphi_{E}$ means the evaluation of $\varphi$ on any basis of $E$. We denote by
$\mathcal{V}_{p}(\mathcal{I})$ the set of $p$-dimensional integral
elements of $\mathcal{I}$.
\end{Def}
$\mathcal{N}$ is an integral manifold of $\mathcal{I}$ if and only if each tangent space of  $\mathcal{N}$ is an integral element of $\mathcal{I}$. It is not hard to notice from the definition that a subspace of a given integral element is also an integral element.  We denote by $\mathcal{I}_{p}=\mathcal{I}\cap\Gamma (\wedge^p \mathrm{T}^{\ast}\mathcal{M})$ the set of differential $p$-forms of $\mathcal{I}$. Thus, 
$\mathcal{V}_{p}(\mathcal{I})=\{E\in G_{p}(\mathrm{T}\mathcal{M}) \, | \, \varphi_{E}=0$ for all $\varphi \in \mathcal{I}_{p} \}$.

\begin{Def}
Let $E$  be an integral element of $\mathcal{I}$. Let
$\{e_{1},e_{2},\dots ,e_{p}\}$ be a basis of  $E\subset \mathrm{T}_{M}\mathcal{M}$. The
polar space of $E$, denoted by $H(E)$, is the vector space defined
as follows:
\begin{equation}
H(E)=\{v\in \mathrm{T}_{M}\mathcal{M} \, |\,  \varphi(v, e_{1},e_{2},\dots , e_{p})=0
\text{ for all } \varphi \in \mathcal{I}_{p+1}\}.
\end{equation}
\end{Def}

Notice that $E\subset H(E)$.  The polar space plays an important role in exterior differential system theory as we can notice from  the following proposition.

\begin{Prop}
Let $E \in \mathcal{V}_{p}(\mathcal{I})$ be a $p$-dimensional
integral element of
 $\mathcal{I}$. A $(p+1)$-dimensional vector space $E^{+}\subset \mathrm{T}_{M}\mathcal{M}$ which contains $E$ is an integral element of  $\mathcal{I}$ if and only if $E^{+}\subset H(E)$.
\end{Prop}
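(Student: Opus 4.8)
The plan is to unwind the two definitions directly: no deep machinery is needed for this particular statement, and the only care required is in matching the evaluation $\varphi_{E^{+}}$ against the defining condition of the polar space.

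First I would fix a basis $\{e_{1},\dots,e_{p}\}$ of $E$ and, since $\dim E^{+}=p+1=\dim E+1$ and $E\subset E^{+}$, choose a vector $v\in E^{+}$ with $E^{+}=E\oplus\mathbb{R}v$, so that $\{e_{1},\dots,e_{p},v\}$ is a basis of $E^{+}$. Because $H(E)$ is cut out by conditions that are linear in its free argument, it is a linear subspace of $\mathrm{T}_{M}\mathcal{M}$, and the excerpt already records $E\subset H(E)$. Hence $E^{+}=E+\mathbb{R}v\subset H(E)$ if and only if $v\in H(E)$, and the whole statement reduces to the single equivalence $E^{+}\in\mathcal{V}_{p+1}(\mathcal{I})\Longleftrightarrow v\in H(E)$.

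Next I would compute. By the definition of $\mathcal{V}_{p+1}(\mathcal{I})$, the space $E^{+}$ is an integral element precisely when $\varphi_{E^{+}}=0$ for every $\varphi\in\mathcal{I}_{p+1}$. Such a $\varphi$ has degree $p+1=\dim E^{+}$, so its restriction to $E^{+}$ lies in a one-dimensional space and is determined by its value on the chosen basis; thus $\varphi_{E^{+}}=0$ is equivalent to $\varphi(e_{1},\dots,e_{p},v)=0$, i.e. up to sign to $\varphi(v,e_{1},\dots,e_{p})=0$. Comparing with the definition $H(E)=\{v : \varphi(v,e_{1},\dots,e_{p})=0 \text{ for all } \varphi\in\mathcal{I}_{p+1}\}$, I obtain exactly $E^{+}\in\mathcal{V}_{p+1}(\mathcal{I})\iff v\in H(E)$, and combining this with the reduction of the previous paragraph gives $\iff E^{+}\subset H(E)$, proving both implications simultaneously.

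The only point deserving attention — and the nearest thing to an obstacle — is well-definedness together with a little degree bookkeeping. First, the condition $\varphi(v,e_{1},\dots,e_{p})=0$ must be intrinsic to $E^{+}$ rather than an artefact of the chosen transversal $v$: replacing $v$ by $v+\sum_{i}c_{i}e_{i}$ changes the value by $\sum_{i}c_{i}\,\varphi(e_{i},e_{1},\dots,e_{p})$, and each summand vanishes because $\varphi$ is alternating with a repeated argument. Second, one should observe that the integrality test for a $(p+1)$-dimensional element, as defined here through $\mathcal{V}_{p+1}(\mathcal{I})$, involves only the degree-$(p+1)$ part $\mathcal{I}_{p+1}$: forms of higher degree vanish on $E^{+}$ automatically for dimensional reasons, and forms of lower degree are not part of this test. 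Once these points are settled, the equivalence follows at once from the two definitions, with no appeal to the differential-ideal (closedness) property needed for this statement.
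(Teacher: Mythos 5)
The paper states this proposition without proof; it belongs to the review of Cartan--K\"ahler theory for which the author refers the reader to \cite{Cartan-Book} and \cite{ExtDiffSys-Book}. Your argument is correct and is the standard one: the reduction to a single transversal vector $v$ via $E\subset H(E)$ and linearity of $H(E)$, the identification of $\varphi_{E^{+}}$ with $\varphi(v,e_{1},\dots,e_{p})$ up to sign because $\wedge^{p+1}(E^{+})^{\ast}$ is one-dimensional, and the alternating-form check of independence from the choice of $v$ are all sound, and your appeal to the paper's characterization of $\mathcal{V}_{p+1}(\mathcal{I})$ through the degree-$(p+1)$ part $\mathcal{I}_{p+1}$ alone legitimately disposes of the lower-degree forms (the equivalence of that characterization with the original definition of integral element is exactly what the ideal property supplies, and the paper takes it as given).
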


In order to check if a given $p$-dimensional integral element of an
EDS $\mathcal{I}$ is contained in a
$(p+1)$-dimensional integral element of  $\mathcal{I}$ , we
introduce the following function called the extension rank $
r:\mathcal{V}_{p}(\mathcal{I}) \longrightarrow
\mathbb{Z}$ that associates each  integral element $E$ with an integer  
$r(E)\!\!\!=$\linebreak $\mathrm{dim} H(E)-(p+1)$. The extension rank  $r(E)$ is in fact the dimension of $\mathbb{P}(H(E)/E)$ and is always greater or equal than $-1$. If $r(E)=-1$, then $\mathrm{dim}H(E) = \mathrm{dim}E$ so that $H(E) = E$ and consequently, there is no hope of extending the integral element. An integral element E is said to be regular if $r(E)$ is constant on a  neighborhood of $M$.

\begin{Def}
An integral flag of  $\mathcal{I}$ in $M\in \mathcal{M}$ of length $n$ is a
sequence $(0)_{M}\subset E_{1}\subset E_{2}\subset \dots \subset
E_{n}\subset \mathrm{T}_{M}\mathcal{M}$ of integral elements $E_{k}$ of $\mathcal{I}$.
\end{Def}
An integral element $E$ is said to be ordinary
if its base point $M\in \mathcal{M}$ is an ordinary zero of $I_{0}=I\cap
\Gamma (\wedge^0 \mathrm{T}^{\ast}\mathcal{M})$ and if there exists an integral flag
$(0)_{M}\subset E_{1}\subset E_{2}\subset \dots \subset E_{n}=E
\subset \mathrm{T}_{z}\mathcal{M}$ where the $E_{k}$, $k=1,\dots ,(n-1)$ are
regular. Moreover, if  $E_{n}$ is itself regular, then  $E$ is said  to be regular. We can now state the following important results of the Cartan--K\"{a}hler theory.

\begin{Thm}\label{TestCartan}(Cartan's test)
Let  $\mathcal{I}\subset \Gamma (\wedge^\ast \mathrm{T}^{\ast}\mathcal{M})$ be  an exterior ideal which does not contain  0-forms (functions on $\mathcal{M}$). Let  $(0)_{M}\subset E_{1}\subset
E_{2}\subset \dots \subset E_{n}\subset \mathrm{T}_{M}\mathcal{M}$ be an integral flag of $\mathcal{I}$. For any $k<n$, we denote by $C_{k}$ the codimension of the polar space $H(E_{k})$ in $\mathrm{T}_{M}\mathcal{M}$. Then $\mathcal{V}_{n}(\mathcal{I})\subset G_{n}(\mathrm{T}\mathcal{M})$ is at least of codimension
$C_{0}+C_{1}+\dots + C_{n-1}$ at $E_{n}$.
Moreover, $E_{n}$ is an ordinary integral flag if and only if
$E_{n}$ has a neighborhood $\mathcal{O}$ in $G_{n}(\mathrm{T}\mathcal{M})$ such that
$\mathcal{V}_{n}(\mathcal{I})\cap \mathcal{O}$ is a manifold of codimension $C_{0}+C_{1}+\dots + C_{n-1}$  in  $\mathcal{O}$.
\end{Thm}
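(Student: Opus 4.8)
The plan is to pass to the Grassmann bundle $G_n(\mathrm{T}\mathcal{M})$, realise $\mathcal{V}_n(\mathcal{I})$ near $E_n$ as the common zero set of an explicit family of functions, and read off the polar data of the flag from the differentials of those functions. First I would fix a local coframe $(\eta^1,\dots,\eta^m)$ near $M$ with dual frame $(e_1,\dots,e_m)$ adapted to the flag, so that $E_k=\langle e_1,\dots,e_k\rangle$ for $k=1,\dots,n$. Every $n$-plane near $E_n$ is then a graph, spanned by the vectors $v_i=e_i+\sum_{a=n+1}^{m}p^a_i\,e_a$ ($i=1,\dots,n$), and the entries $p^a_i$ (together with coordinates on $\mathcal{M}$) give local coordinates on $G_n(\mathrm{T}\mathcal{M})$ with $E_n$ at the origin $p=0$. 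Since $\mathcal{I}$ contains no $0$-forms, the characterisation recorded above shows $\mathcal{V}_n(\mathcal{I})=\{\,F_\varphi=0:\varphi\in\mathcal{I}_n\,\}$, where $F_\varphi(E)=\varphi(v_1,\dots,v_n)$; these are the defining functions whose Jacobian at $E_n$ I must analyse.

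For the codimension bound, I would, for each level $k=0,\dots,n-1$, choose $(k{+}1)$-forms $\varphi^{\beta}_k\in\mathcal{I}_{k+1}$ ($\beta=1,\dots,C_k$) whose polar functionals $\pi^{\beta}_k\colon v\mapsto \varphi^{\beta}_k(v,e_1,\dots,e_k)$ are independent and cut out $H(E_k)$. Because $E_n$ is an integral element containing each $E_k$, every such $(k{+}1)$-form restricts to zero on $E_n$, so the $\pi^{\beta}_k$ vanish on $E_n$ and descend to independent functionals on $\mathrm{T}_M\mathcal{M}/E_n=\langle e_{n+1},\dots,e_m\rangle$. Forming the $n$-forms $\Phi^{\beta}_k=\varphi^{\beta}_k\wedge\eta^{k+2}\wedge\cdots\wedge\eta^{n}\in\mathcal{I}_n$, a short computation gives, at $p=0$,
\[ \frac{\partial F_{\Phi^{\beta}_k}}{\partial p^a_{k+1}}=\pm\,\pi^{\beta}_k(e_a),\qquad \frac{\partial F_{\Phi^{\beta}_k}}{\partial p^a_{i}}=0 \ \text{ for } i>k+1, \]
so that the differentials $dF_{\Phi^{\beta}_k}$ form a block lower-triangular array in the coordinate blocks $p^{\bullet}_{1},\dots,p^{\bullet}_{n}$, with invertible diagonal blocks $[\pi^{\beta}_k(e_a)]$ of rank $C_k$. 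Hence the Jacobian of the defining family has rank at least $C_0+\cdots+C_{n-1}$ at $E_n$, and since $\mathcal{V}_n(\mathcal{I})$ lies in the zero set of these $\sum_k C_k$ functions with independent differentials, its dimension at $E_n$ is at most $\dim G_n(\mathrm{T}\mathcal{M})-\sum_k C_k$. This is the asserted lower bound on the codimension.

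For the equivalence, I would argue by induction along the flag, using the extension proposition (a $(k{+}1)$-plane $E^{+}\supset E'$ is integral iff $E^{+}\subset H(E')$) to pass from $\mathcal{V}_k$ to $\mathcal{V}_{k+1}$. Assuming inductively that $\mathcal{V}_k$ is smooth of codimension $C_0+\cdots+C_{k-1}$ near $E_k$, consider the incidence variety $\mathcal{F}_k=\{(E',E^{+}):E'\in\mathcal{V}_k,\ E^{+}\in\mathcal{V}_{k+1},\ E'\subset E^{+}\}$. Its projection to $\mathcal{V}_k$ has fibre $\mathbb{P}(H(E')/E')$ of dimension $r(E')=\dim H(E')-(k+1)$; regularity of $E_k$ says exactly that $r(E')$, equivalently $C_k$, is locally constant on $\mathcal{V}_k$, which makes $\mathcal{F}_k$ a smooth fibre bundle over $\mathcal{V}_k$. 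Projecting instead to $\mathcal{V}_{k+1}$ (fibres open in the $k$-dimensional Grassmannian $G_k(E^{+})$) and carrying out the dimension count yields
\[ \dim G_{k+1}(\mathrm{T}\mathcal{M})-\dim\mathcal{V}_{k+1}=C_0+\cdots+C_{k}, \]
which closes the induction and gives, for an ordinary flag, that $\mathcal{V}_n(\mathcal{I})$ is a manifold of codimension $\sum_{k=0}^{n-1}C_k$ near $E_n$. Conversely, if $\mathcal{V}_n\cap\mathcal{O}$ is a manifold of exactly that codimension, the Jacobian of the defining family has constant rank $\sum_k C_k$; comparing with the triangular lower bound forces each $C_k$ to be locally constant, i.e. each $E_k$ regular, so the flag is ordinary.

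I expect the real work to be in the second half: the lower bound is essentially the linear algebra of the polar functionals, but turning regularity into an \emph{exact} codimension and smoothness requires a genuine constant-rank argument. The delicate point is that for a non-ordinary flag the rank of $\{F_\varphi\}$ can jump, so one must use the local constancy of the $C_k$ to guarantee that the fibre dimensions of $\mathcal{F}_k\to\mathcal{V}_{k+1}$ — equivalently the rank of the defining equations — stay constant in a neighbourhood, and only then may the constant-rank theorem be applied to conclude smoothness. Verifying this constancy, and that no integral element near $E_n$ imposes extra independent equations beyond the $\sum_k C_k$ exhibited, is the main obstacle.
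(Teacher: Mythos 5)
The paper itself does not prove this theorem: it is quoted as background, with the proof deferred to \'E.~Cartan's book and to Chapter~III of Bryant--Chern--Gardner--Goldschmidt--Griffiths, so your attempt can only be measured against that standard proof. Your first half reproduces it faithfully and correctly: the graph coordinates $p^a_i$ on $G_n(\mathrm{T}\mathcal{M})$, the reduction of $\mathcal{V}_n(\mathcal{I})$ to the zero locus of the functions $F_\varphi$ for $\varphi\in\mathcal{I}_n$, the choice of $C_k$ forms $\varphi^\beta_k$ realizing the polar equations of $E_k$, the wedge with $\eta^{k+2}\wedge\cdots\wedge\eta^n$, and the block lower-triangular Jacobian whose diagonal blocks have rank $C_k$ because the $\pi^\beta_k$ vanish on $E_n\subset H(E_k)$ and hence descend to independent functionals on $\mathrm{T}_M\mathcal{M}/E_n$. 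That gives the codimension bound.

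The ``moreover'' clause is where your argument has genuine gaps, and you have in effect conceded them rather than closed them. (a) In the forward direction, the dimension count through the incidence variety $\mathcal{F}_k$ only bounds $\dim\mathcal{V}_{k+1}$ if you already know that the image of $\mathcal{F}_k$ in $G_{k+1}(\mathrm{T}\mathcal{M})$ is a manifold of dimension $\dim\mathcal{F}_k-k$; a smooth map with $k$-dimensional fibres need not have a manifold image, and invoking ``the constant-rank theorem'' presupposes the rank statement you are trying to establish. The standard way out is different: one shows that the common zero locus $Z$ of the $\sum_k C_k$ chosen polar functions is \emph{contained in} $\mathcal{V}_n$ near $E_n$, by inducting along the flag and using regularity to guarantee that for every integral $E'_k$ near $E_k$ the polar space $H(E'_k)$ still has codimension exactly $C_k$ and is cut out by the \emph{same} forms $\varphi^\beta_k$ (lower semicontinuity of rank gives $\geq C_k$ independent equations, regularity caps it at $C_k$, so the chosen ones span); then $Z=\mathcal{V}_n$ locally and smoothness is immediate from the first half. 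This step is absent from your outline. (b) In the converse direction, ``comparing with the triangular lower bound forces each $C_k$ to be locally constant'' is not an argument: if regularity failed at level $k$, you would need to produce an integral element $E'_k$ near $E_k$ with $C_k(E'_k)>C_k$ \emph{and} extend it to an $n$-dimensional integral element $E'_n$ inside the given neighbourhood $\mathcal{O}$, so that the already-proved bound gives $\mathcal{V}_n$ codimension $>C_0+\cdots+C_{n-1}$ at $E'_n$, contradicting the assumed smooth structure. The existence of such an extension is exactly what must be checked. As written, your proposal proves the inequality but only sketches a programme for the equivalence.
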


The numbers $C_{k}$ are called Cartan characters of the $k$-integral element. The following proposition is useful in the applications. It allows us to compute the Cartan characters of the constructed flag in the proof of the Theorem \ref{CLGIEVBThm}.

\begin{Prop}\label{PropCp}

At a point $M\in \mathcal{M}$,  let $E$ be an $n$-dimensional integral element of an exterior ideal $\mathcal{I}\cap\Gamma (\wedge^\ast \mathrm{T}^{\ast}\mathcal{M})$ which does not contain differential 0-forms.  Let $\omega_{1},\omega_{2},\dots ,
\omega_{n},\pi_{1},\pi_{2}, \dots , \pi_{s}$ (where $s=dim\, \mathcal{M} - n$) be a coframe in an open neighborhood of  $M\in M$ such that $E=\{v \in \mathrm{T}_{M}\mathcal{M}\,  | \, \pi_{a}(v)=0 \text{ for all } a=1,\dots ,  s\}$. For
all  $p\leqslant n$, we define $E_{p}=\{v\in E \, | \, \omega_{k}(v)=0 \text{ for all } k > p \}$. Let $\{\varphi_{1}, \varphi_{2},\dots
,\varphi_{r}\}$ be the set of differential forms which generate the
exterior ideal  $\mathcal{I}$, where $\varphi_{\rho}$ is of degree $(d_{\rho}+1)$.
For all $\rho$, there exists an expansion
\begin{equation}
\varphi_{\rho}=\sum_{|J|=d_{\rho}}\pi_{\rho}^{J}\wedge\omega_{J}+\tilde{\varphi}_{\rho}
\end{equation}
where the   1-forms $\pi_{\rho}^{J}$ are linear combinations of
the forms $\pi$ and the terms $\tilde{\varphi}_{\rho}$ are, either
of degree  2 or more on  $\pi$, or vanish at $z$.
Moreover, we have
\begin{equation}
H(E_{p})=\{v\in \mathrm{T}_{M}\mathcal{M} | \pi_{\rho}^{J}(v)=0 \text{ for all } \rho
\text{ and } \sup J\leqslant p\}
\end{equation}
In particular, for the integral flag $(0)_{z}\subset E_{1}\subset
E_{2}\subset\dots \subset E_{n}\cap \mathrm{T}_{z}\mathcal{M}$ of $\mathcal{I}$,
the Cartan characters $C_{p}$ correspond to the number of linear independent  forms
$\{\pi_{\rho}^{J}|_{z} \text{ such that } \sup J \leqslant p\}.$
\end{Prop}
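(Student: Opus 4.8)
The plan is to work entirely in the adapted coframe $\omega_1,\dots,\omega_n,\pi_1,\dots,\pi_s$ and to push every computation down to the generators $\varphi_\rho$. First I would establish the expansion. Writing each generator as a sum of wedge monomials in the coframe, I would sort the monomials by the number of $\pi$-factors they carry: zero, exactly one, or at least two. The monomials with exactly one $\pi$-factor are precisely of the form $\pi_\rho^J\wedge\omega_J$ with $|J|=d_\rho$ and $\pi_\rho^J$ a combination of the $\pi_a$, which produces the displayed sum. The monomials with at least two $\pi$-factors are collected into $\tilde{\varphi}_\rho$. It then remains to place the purely-$\omega$ part into $\tilde{\varphi}_\rho$ as well, by showing it vanishes at $M$: since $E\in\mathcal{V}_n(\mathcal{I})$ and $\varphi_\rho\in\mathcal{I}$, the form $\varphi_\rho$ restricts to zero on $E$, and restricting to $E=\{\pi_a=0\}$ kills exactly the $\pi$-bearing monomials, leaving the purely-$\omega$ part $\sum_{|K|=d_\rho+1}c_K\,\omega_K$; since the $\omega_K|_E$ are linearly independent, the coefficients $c_K$ must vanish at $M$.

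The core is the polar-space computation. Let $f_1,\dots,f_n,g_1,\dots,g_s$ be the frame dual to the coframe, so $E_p=\mathrm{span}(f_1,\dots,f_p)$ with basis $e_k=f_k$, and note $\pi_a(e_k)=0$, $\omega_i(e_k)=\delta_{ik}$. By definition $v\in H(E_p)$ iff $\varphi(v,e_1,\dots,e_p)=0$ for every $\varphi\in\mathcal{I}_{p+1}$, and since the $\varphi_\rho$ generate $\mathcal{I}$ as an exterior ideal it suffices to test $\varphi=\beta\wedge\varphi_\rho$ with $\deg\beta=p-d_\rho$. The key bookkeeping, carried out with the shuffle formula for evaluating a wedge product on a tuple of vectors, is that only one piece survives evaluation at $M$ on $(v,e_1,\dots,e_p)$: the zero-$\pi$ part of $\varphi_\rho$ drops out because it vanishes at $M$; any monomial carrying at least two $\pi$-factors drops out because the $e_k$ annihilate every $\pi$ while only the single slot $v$ can absorb one, forcing a surplus $\pi$ to pair with some $e_k$; and for the same reason only the purely-$\omega$ part $\beta_0$ of $\beta$ contributes. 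What remains is
\[
(\beta\wedge\varphi_\rho)(v,e_1,\dots,e_p)\big|_M=\pm\sum_{|J|=d_\rho}\pi_\rho^J(v)\,(\beta_0\wedge\omega_J)(e_1,\dots,e_p)\big|_M,
\]
where the scalar $(\beta_0\wedge\omega_J)(e_1,\dots,e_p)$ vanishes unless $J\subseteq\{1,\dots,p\}$, i.e. $\sup J\leq p$, since any factor $\omega_j$ with $j>p$ kills every $e_k$.

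This identity delivers both inclusions at once. If $\pi_\rho^J(v)=0$ for all $\rho$ and all $J$ with $\sup J\leq p$, the right-hand side vanishes for every $\beta$, so $v\in H(E_p)$. Conversely, to force a single $\pi_\rho^J(v)=0$ I would choose $\beta_0=\omega_C$ with $C=\{1,\dots,p\}\setminus J$, which makes $(\omega_C\wedge\omega_{J'})(e_1,\dots,e_p)$ equal to $\pm1$ for $J'=J$ and $0$ for every other admissible $J'$, since any $J'\neq J$ inside $\{1,\dots,p\}$ meets $C$ and produces a repeated $\omega$-factor. Hence $H(E_p)=\{v\in\mathrm{T}_M\mathcal{M}\mid\pi_\rho^J(v)=0\text{ for all }\rho,\ \sup J\leq p\}$ exactly. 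The statement on Cartan characters is then immediate: $C_p$ is by definition the codimension of $H(E_p)$ in $\mathrm{T}_M\mathcal{M}$, and since $H(E_p)$ is cut out by the linear conditions $\pi_\rho^J|_M=0$ with $\sup J\leq p$, that codimension equals the number of linearly independent forms among $\{\pi_\rho^J|_M\mid\sup J\leq p\}$.

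I expect the main obstacle to be the second paragraph: carefully justifying, through $\pi$-factor counting in the shuffle expansion, that testing against the generators $\varphi_\rho$ rather than against all of $\mathcal{I}_{p+1}$ loses nothing, and keeping the signs and multi-index bookkeeping straight so that the surviving term is exactly $\pm\pi_\rho^J(v)$ times a determinant of the $\delta_{ik}$. The expansion and the Cartan-character count are routine once this reduction is secured.
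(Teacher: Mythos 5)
The paper does not actually prove Proposition~\ref{PropCp}: it is quoted as a standard result of Cartan--K\"ahler theory, with the reader referred to \cite{Cartan-Book} and Chapter~III of \cite{ExtDiffSys-Book}, where the proof is exactly the one you give (sorting monomials by the number of $\pi$-factors, using integrality of $E$ to kill the pure-$\omega$ part at $M$, and reducing the polar equations to the single-$\pi$ terms evaluated on $(v,e_{1},\dots ,e_{p})$). Your argument is correct, including the converse inclusion via the test forms $\omega_{C}\wedge\varphi_{\rho}$ with $C=\{1,\dots ,p\}\smallsetminus J$, so there is nothing to add.
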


The following theorem is a generalization in higher dimension of the well-known Cauchy problem.

\begin{Thm}[Cauchy-Kowalevskaya]
Let $y$ be a coordinate on $\mathbb{R}$, let $x=(x^{i})=$ be a coordinate on  $\mathbb{R}^n$, let $z=(z^a)$ be coordinate on $\mathbb{R}^s$, and let $p^{a}_{i}$ be coordinate on $\mathrm{R}^{ns}$.  Let $\mathcal{D} \subset \mathbb{R}^n\times\mathbb{R}\times\mathbb{R}^s\times\mathbb{R}^{ns}$ be an open domain, and let $G:\mathcal{D}\longrightarrow \mathbb{R}^s$ be a real analytic mapping. Let $\mathcal{D}_{0}\subset \mathbb{R}^n$ be an open domain and let $f:\mathcal{D}_{0}\longrightarrow \mathbb{R}^s$ be a real analytic mapping  so that the ''1-graph'' 
\begin{equation}
\Gamma_{f}=\{(x,y,f(x),\mathrm{D}f(x))| x\in \mathcal{D}_{0} \}
\end{equation}
lies in $\mathcal{D}$ for some constanta $y_{0}$, where $\mathrm{D}f(x)\in \mathbb{R}^{ns}$ is the Jacobian of $f$ described by the condition that $p^{a}_{i} (\mathrm{D}f(x)) = \partial f^{a}/\partial x^{i}$. Then, there exists an open neighborhood $\mathcal{D}_{1}\subset \mathcal{D}_{0}\times \mathbb{R}$ of $\mathcal{D}_{0}\times \{ y_{0}\}$ and a real analytic mapping $F:\mathcal{D}_{1}\longrightarrow \mathbb{R}^s $ which satsifies the PDE with initial condition
\begin{equation}\label{CKinitialData}
\begin{split}
\partial F/\partial y &= G(x,y,F,\partial F/\partial x)\\
F(x,y_{0}) &= f(x) \quad \text{ for all } x\in \mathcal{D}_{0}
\end{split}
\end{equation}
Moreover, $F$ is unique in the sense that any other real-analytic solution of  (\ref{CKinitialData}) agrees with $F$ in some neighborhood of $\mathcal{D}_{0}\times \{ y_{0}\}$.
\end{Thm}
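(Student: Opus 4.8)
The plan is to prove this by the classical \emph{method of majorants}, splitting the argument into a formal (algebraic) construction of a unique power-series solution and an analytic comparison argument that establishes its convergence.

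First I would normalize the problem by routine reductions. Translating coordinates, I may assume the base point is the origin and $y_0 = 0$. Replacing $F$ by $F - f(x)$, with $f$ regarded as independent of $y$ and the resulting correction absorbed into $G$, makes the initial data homogeneous, $F(x,0)=0$, while preserving the analyticity of $G$. It is convenient, though not essential, to adjoin the independent variables as additional unknowns (introduce $z^{s+i}=x^i$ and $z^{s+n+1}=y$ with $\partial_y z^{s+i}=0$ and $\partial_y z^{s+n+1}=1$), after which $G$ may be taken to depend only on the unknowns and their first $x$-derivatives, i.e.\ the system is in the autonomous normal form $\partial_y F = G(F,\partial_x F)$.

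Next I would construct the formal solution and extract uniqueness simultaneously. Expanding $F$ as a power series in $(x,y)$ about the origin, the initial condition fixes every Taylor coefficient of a pure $x$-monomial at $y=0$, and differentiating $\partial_y F = G(\cdots)$ repeatedly in $y$ and evaluating at $y=0$ expresses each successive $y$-derivative through coefficients already known. Thus all Taylor coefficients of $F$ are determined recursively and uniquely. The structural fact I would record with care is that each such coefficient is a \emph{universal polynomial with non-negative coefficients} in the Taylor coefficients of $G$ and of the initial data. Since an analytic function is determined by its Taylor expansion, this already yields the uniqueness assertion: any two analytic solutions share the same expansion at the origin and hence agree near $\mathcal{D}_0\times\{y_0\}$.

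The substantive step, and the main obstacle, is convergence of this formal series. Here I would invoke the majorant comparison: one produces a right-hand side $\bar G$ \emph{majorizing} $G$ (each Taylor coefficient of $\bar G$ dominates the modulus of the corresponding coefficient of $G$) together with data $\bar f$ majorizing $f$, chosen simple enough that the majorant problem $\partial_y \bar F = \bar G(\bar F,\partial_x \bar F)$, $\bar F(x,0)=\bar f(x)$, admits an explicit convergent solution. A standard Cauchy-type choice is $\bar G = \dfrac{Mr}{\,r-(\sum_a z^a + \sum_{a,i} p^a_i)\,}$ for suitable constants $M,r>0$; by symmetry the majorant problem reduces to a single equation in two variables, solvable by characteristics (equivalently, an implicit algebraic relation), yielding a genuinely analytic $\bar F$. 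Because the coefficients of $F$ are non-negative-coefficient polynomials in the data, the term-by-term domination $|G\text{-coefficients}|\le \bar G\text{-coefficients}$ propagates through the recursion to $|F\text{-coefficients}|\le \bar F\text{-coefficients}$. Hence the series for $F$ is dominated by the convergent series for $\bar F$ and converges on a neighborhood $\mathcal{D}_1$ of $\mathcal{D}_0\times\{y_0\}$, defining a real-analytic $F$ which by construction of its coefficients satisfies the PDE with the prescribed initial data. The points requiring genuine verification are the non-negativity of the universal coefficient polynomials (which legitimizes the comparison) and the explicit solvability of the chosen majorant; everything else is bookkeeping.
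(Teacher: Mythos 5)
The paper does not prove this statement: it is quoted verbatim as classical background (essentially from Bryant--Chern--Gardner--Goldschmidt--Griffiths, \emph{Exterior Differential Systems}) in the review section on Cartan--K\"ahler theory, so there is no in-paper argument to compare yours against. Your proposal is the standard and correct proof by the method of majorants: the normalizations (translating to $y_0=0$, subtracting the initial data, adjoining the independent variables to reach the autonomous normal form) are all legitimate, the recursive determination of the Taylor coefficients as universal polynomials with non-negative coefficients in the data is exactly the structural fact that makes the comparison work, uniqueness does follow from the determination of the jet at each point of the initial surface, and the Cauchy majorant $\bar G = Mr/\bigl(r-\sum_a z^a-\sum_{a,i}p^a_i\bigr)$ with its explicitly solvable symmetric reduction is the classical closing move. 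The only point you pass over a little quickly is that the majorant argument yields convergence on a polydisc about each point of $\mathcal{D}_0\times\{y_0\}$, and one must invoke the uniqueness you already established to glue these local solutions into a single analytic $F$ on a neighborhood $\mathcal{D}_1$ of all of $\mathcal{D}_0\times\{y_0\}$; with that bookkeeping made explicit, the proof is complete.
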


The following theorem is of  great importance not only because it is a  generalization of the well-known Frobenius theorem but also represents a generalization of the  Cauchy--Kovalevskaya theorem.

\begin{Thm}\label{ThCartanKahler}(Cartan--K\"{a}hler)\\
Let $\mathcal{I}\subset\Gamma (\wedge^\ast \mathrm{T}^{\ast}\mathcal{M})$ be a real analytic exterior differential ideal which does not contain  functions. Let $\mathcal{X} \subset \mathcal{M}$ be a $p$-dimensional connected real analytic K\"{a}hler-regular integral manifold of $\mathcal{I}$.
Suppose that $r=r(\mathcal{X})\geqslant 0$. Let  $\mathcal{Z}\subset \mathcal{M}$ be a real analytic
submanifold of $\mathcal{M}$ of  codimension $r$ which contains  $\mathcal{X}$ and such
that  $T_{M}\mathcal{Z}$ and
$H(T_{M}\mathcal{X})$ are transversal in  $\mathrm{T}_{M}\mathcal{M}$ for all  $M\in \mathcal{X} \subset \mathcal{M}$.
There exists then a $(p+1)$-dimensional connected real
analytic integral manifold $\mathcal{Y}$ of $\mathcal{I}$, such that
$\mathcal{X}\subset \mathcal{Y} \subset \mathcal{Z}$. Moreover, $\mathcal{Y}$
is unique in the sense that another integral manifold of
$\mathcal{I}$ having the stated properties coincides with $\mathcal{Y}$ on an open neighborhood of  $\mathcal{X}$.
\end{Thm}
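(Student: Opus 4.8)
The plan is to reduce the problem of thickening $\mathcal{X}$ by one dimension to an initial value problem solvable by the Cauchy--Kovalevskaya theorem, with the real-analyticity of $\mathcal{I}$, $\mathcal{X}$ and $\mathcal{Z}$ supplying exactly the hypotheses that theorem requires. First I would carry out the dimension bookkeeping that makes the extension essentially unique. Writing $P=\dim H(T_{M}\mathcal{X})=p+1+r$ (from the definition of the extension rank), the transversality of $T_{M}\mathcal{Z}$, of codimension $r$, with $H(T_{M}\mathcal{X})$ forces
\begin{equation}
\dim\big(T_{M}\mathcal{Z}\cap H(T_{M}\mathcal{X})\big)=(\dim\mathcal{M}-r)+(p+1+r)-\dim\mathcal{M}=p+1 .
\end{equation}
Since $\mathcal{X}\subset\mathcal{Z}$ and $E\subset H(E)$, the $p$-dimensional $T_{M}\mathcal{X}$ lies in this intersection; and, by the proposition relating polar spaces to integral elements, any $(p+1)$-plane in $H(T_{M}\mathcal{X})$ containing $T_{M}\mathcal{X}$ is an integral element. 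Hence there is exactly one $(p+1)$-dimensional integral element inside $T_{M}\mathcal{Z}$ containing $T_{M}\mathcal{X}$ at each $M\in\mathcal{X}$, which singles out the direction in which $\mathcal{X}$ must be thickened.

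Next I would introduce adapted real-analytic coordinates. Using Kähler-regularity to guarantee that the polar equations have constant rank along $\mathcal{X}$, I would choose a coframe as in Proposition~\ref{PropCp} and coordinates $(x^{1},\dots,x^{p},y,z^{1},\dots,z^{r})$ on $\mathcal{Z}$ so that $\mathcal{X}=\{y=y_{0},\,z=0\}$ and so that the sought integral manifold $\mathcal{Y}$ is the graph $z^{a}=F^{a}(x,y)$ with $F^{a}(x,y_{0})=0$. The condition $\iota_{\mathcal{Y}}^{\ast}\varphi_{\rho}=0$ on the generators then becomes a first-order PDE system for $F$. The normal form $\varphi_{\rho}=\sum_{|J|=d_{\rho}}\pi_{\rho}^{J}\wedge\omega_{J}+\tilde{\varphi}_{\rho}$ of Proposition~\ref{PropCp} isolates a principal part whose $\pi$-factors are, by the codimension count above, exactly solvable for $\partial F/\partial y$; that is, the system can be put into the Cauchy--Kovalevskaya form $\partial F/\partial y=G(x,y,F,\partial F/\partial x)$ with $G$ real-analytic and initial data $F(\cdot,y_{0})=0$ describing $\mathcal{X}$. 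Applying the Cauchy--Kovalevskaya theorem yields a unique real-analytic $F$, hence a candidate $(p+1)$-dimensional real-analytic manifold $\mathcal{Y}$ with $\mathcal{X}\subset\mathcal{Y}\subset\mathcal{Z}$.

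The step I expect to be the main obstacle is verifying that $\mathcal{Y}$ is an integral manifold of the \emph{entire} ideal $\mathcal{I}$, not merely of the subset of equations used to put the system into evolutionary form. The remaining components of $\iota_{\mathcal{Y}}^{\ast}\varphi_{\rho}$ vanish on $\mathcal{X}$ by construction but must be shown to vanish for all $y$. Here the hypothesis that $\mathcal{I}$ is a \emph{differential} ideal is essential: I would differentiate the pulled-back generators and use $\mathrm{d}\varphi_{\rho}\in\mathcal{I}$ together with the structure equations to show that the collection of functions measuring the failure of $\iota_{\mathcal{Y}}^{\ast}\varphi_{\rho}$ to vanish satisfies a \emph{homogeneous linear} first-order system in the $y$-direction with zero initial data along $\{y=y_{0}\}$. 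The uniqueness clause of the Cauchy--Kovalevskaya theorem, applied to this auxiliary linear system, then forces these functions to vanish identically, so that $\iota_{\mathcal{Y}}^{\ast}\varphi_{\rho}=0$ throughout $\mathcal{Y}$. Organizing this auxiliary system into genuine Cauchy--Kovalevskaya form, so that every generator is accounted for and no $y$-derivatives beyond first order survive, is the delicate heart of the argument.

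Finally, uniqueness of $\mathcal{Y}$ among integral manifolds with the stated properties follows directly from the uniqueness of $F$ in the Cauchy--Kovalevskaya theorem, and connectedness together with real-analyticity propagates the coincidence to a full neighborhood of $\mathcal{X}$.
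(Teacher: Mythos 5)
The paper does not prove this theorem: it is quoted as background from \'E.~Cartan's book and from Bryant et al., \emph{Exterior Differential Systems}, with only the remark that the proof relies on the Cauchy--Kovalevskaya theorem, so there is no in-paper argument to compare against. Your outline is exactly that standard proof --- the transversality dimension count $(\dim\mathcal{M}-r)+(p+1+r)-\dim\mathcal{M}=p+1$ singling out a unique $(p+1)$-dimensional integral element in $T_{M}\mathcal{Z}$, the reduction to a determined Cauchy--Kovalevskaya system for the graph of $\mathcal{Y}$ over $\mathcal{Z}$, and the use of the differential-ideal property to derive a homogeneous linear auxiliary system whose C--K uniqueness forces the remaining generators to vanish --- and it is correct as a sketch. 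The one step you yourself flag as delicate (organizing the auxiliary system, using K\"ahler-regularity to keep the polar equations of constant rank so that the generators can be put in the normal form of Proposition~\ref{PropCp}) is indeed where all the substantive work lies in the cited references, and your proposal indicates the right mechanism for it without carrying it out.
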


The analycity condition of the exterior differential ideal is crucial because of the requirements in the Cauchy--Kovalevskaya theorem used in the Cartan--K\"{a}hler theorem's proof. It has an important corollary. Actually, in the application, this corollary is often more used than the theorem and is sometimes called  \textit{the Cartan--K\"{a}hler theorem} in literature.

\begin{Cor}\label{CorCartanKahler}(Cartan--K\"{a}hler)\\
Let $\mathcal{I}$ be an analytic exterior differential ideal on a manifold
$\mathcal{M}$. If $E$ is an ordinary integral element of
$\mathcal{I}$, there exists an integral manifold of
$\mathcal{I}$ passing through $z$ and having $E$ as a tangent
space at  point $M$.
\end{Cor}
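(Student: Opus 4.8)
The plan is to obtain the desired integral manifold by a finite induction that builds integral manifolds of successively higher dimension, each extending the previous one, using the Cartan--K\"{a}hler theorem (Theorem \ref{ThCartanKahler}) as the engine for each single step. The starting data is exactly what the hypothesis ``$E$ is an ordinary integral element'' supplies: writing $n=\dim E$, there is an integral flag $(0)_{M}\subset E_{1}\subset E_{2}\subset \dots \subset E_{n}=E$ in which $E_{1},\dots ,E_{n-1}$ are regular and $M$ is an ordinary zero of $\mathcal{I}_{0}$.

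First I would dispose of the $0$-forms. Since $M$ is an ordinary zero of $\mathcal{I}_{0}$, the common zero locus of the functions in $\mathcal{I}$ is, near $M$, a real analytic submanifold $\mathcal{M}'$ whose tangent space at $M$ contains $E$; restricting to $\mathcal{M}'$, I may assume that $\mathcal{I}$ contains no functions, which is the hypothesis required in Theorem \ref{ThCartanKahler}. A single point then serves as the base case: $\mathcal{X}_{0}=\{M\}$ is a $0$-dimensional real analytic integral manifold with $T_{M}\mathcal{X}_{0}=(0)=E_{0}$.

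Next comes the inductive step, which is the heart of the argument. Suppose for some $k$ with $0\leqslant k<n$ I have produced a connected real analytic integral manifold $\mathcal{X}_{k}$ through $M$ with $T_{M}\mathcal{X}_{k}=E_{k}$, where $E_{k}$ is regular (hence K\"{a}hler-regular as a manifold). Regularity gives $r(E_{k})=\dim H(E_{k})-(k+1)\geqslant 0$, so Theorem \ref{ThCartanKahler} applies. The subtlety --- and the step I expect to be the real obstacle --- is not merely to extend $\mathcal{X}_{k}$ to a $(k+1)$-dimensional integral manifold, but to arrange that the new tangent space at $M$ is \emph{exactly} the prescribed $E_{k+1}$ of the flag, rather than some other admissible extension. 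By the polar-space criterion (the proposition characterizing $(k+1)$-dimensional extensions of an integral element), every $(k+1)$-dimensional integral element containing $E_{k}$ lies in $H(E_{k})$, and $E_{k+1}$ is one such. I would therefore choose the restraining submanifold $\mathcal{Z}$ of codimension $r=r(E_{k})$, containing $\mathcal{X}_{k}$, so that $T_{M}\mathcal{Z}$ is transversal to $H(E_{k})$ with $T_{M}\mathcal{Z}\cap H(E_{k})=E_{k+1}$; this is possible precisely because $\dim H(E_{k})-\dim E_{k+1}=r$, so one cuts $H(E_{k})$ down to $E_{k+1}$ by a complement of the right codimension. Theorem \ref{ThCartanKahler} then yields a unique $(k+1)$-dimensional analytic integral manifold $\mathcal{X}_{k+1}$ with $\mathcal{X}_{k}\subset \mathcal{X}_{k+1}\subset \mathcal{Z}$, and its tangent space $T_{M}\mathcal{X}_{k+1}$, being a $(k+1)$-dimensional integral element containing $E_{k}$ and contained in $T_{M}\mathcal{Z}$, lies in $H(E_{k})\cap T_{M}\mathcal{Z}=E_{k+1}$, hence equals $E_{k+1}$ by a dimension count.

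Iterating this step from $k=0$ up to $k=n-1$ --- which is legitimate because $E_{1},\dots ,E_{n-1}$ are all regular, exactly the K\"{a}hler-regular manifolds fed into Theorem \ref{ThCartanKahler} at each stage --- produces $\mathcal{Y}:=\mathcal{X}_{n}$, a connected real analytic $n$-dimensional integral manifold of $\mathcal{I}$ through $M$ with $T_{M}\mathcal{Y}=E_{n}=E$, as required. The main thing to watch throughout is the tangent-space bookkeeping in the inductive step: at each level one must check that the transversal choice of $\mathcal{Z}$ simultaneously meets the transversality hypothesis of Theorem \ref{ThCartanKahler} and pins the tangent space to the correct flag member, since without the latter the construction would only guarantee an integral manifold tangent to \emph{some} extension of $E_{k}$, not to $E$ itself.
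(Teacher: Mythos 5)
The paper states this corollary without proof, deferring to Cartan's book and to Bryant et al.\ for the Cartan--K\"{a}hler theory, so there is no in-paper argument to compare against; your proposal is the standard induction-on-the-flag derivation from Theorem \ref{ThCartanKahler} found in those references, and it is correct: the key point, choosing the restraining manifold $\mathcal{Z}$ so that $T_{M}\mathcal{Z}\cap H(E_{k})=E_{k+1}$ and then pinning down $T_{M}\mathcal{X}_{k+1}$ by a dimension count, is exactly right. Two small points worth tightening: the inequality $r(E_{k})\geqslant 0$ follows from the existence of $E_{k+1}\subset H(E_{k})$ in the flag (not from regularity itself, which only gives local constancy of $r$), and the transversality of $T\mathcal{Z}$ with $H(T\mathcal{X}_{k})$ required by Theorem \ref{ThCartanKahler} must hold at every point of $\mathcal{X}_{k}$, which you get by shrinking $\mathcal{X}_{k}$ to a neighborhood of $M$ where the open condition of transversality persists.
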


The following example, even ``trivial'',  illustrate how one can apply the Cartan-K\"{a}hler theory to non-Pfaffian systems.
\begin{Ex}
Let $(\mathbb{R}^4 , \mathrm{d}x^1\wedge\mathrm{d}x^3 + \mathrm{d}x^2\wedge\mathrm{d}x^4)$ be a symplectic space. Let us look for Lagrangian surfaces, i.e., integral surfaces for $\Omega$. Since $\Omega= \mathrm{d}x^1\wedge\mathrm{d}x^3 + \mathrm{d}x^2\wedge\mathrm{d}x^4$ is closed, the exterior ideal $ \mathcal{I}$ generated by $\Omega$ is closed.

$\mathcal{I}$ contains neither function nor differential 1-forms. Any point $M$ of $\mathbb{R}^4$ is an integral point and any tangent vector on $\mathrm{T}_{M}\mathbb{R}^4 \simeq \mathbb{R}^4$ is an integral 1-element. Let us notice that $r_{0} = 3$. Consequently, there exist integral 2-elements of $\mathcal{I}$. Let us consider  $\displaystyle{E_{1} = \text{span}\{\frac{\partial }{\partial x^1}\}}$ as an integral 1-element of $\mathcal{I}$.

Note that $H(E_{1}) = \{ X \in \mathrm{T}_{M}\mathbb{R}^4 | \Omega(X, E_{1}) = 0\}$. Hence $H(E_{1})$ is defined by the equation $X^1 = 0$, where $X^{i}$ are the components of the tangent vector $X$. Therefore, $C_{1} = 1$ and the extension rank $r_{1}= 2$, and there exist 2-integral elements of $\mathcal{I}$.  Consider $\displaystyle{E_{2} =span\{ \frac{\partial }{\partial x^1}, \frac{\partial }{\partial x^2}\}}$ is an integral 2-element of $\mathcal{I}$.  Since the extension ranks $r_{0}, r_{1}$ and $r_{2}$ are constant on a neighborhood  of $M$, the flag $M\subset E_{1} \subset E_{2}$ is regular. The Cartan-K\"{a}hler theorem assures then the existence of integral surfaces of $\mathcal{I}$.
 
 \end{Ex}

\section[The main result: construction of conservation laws]{The main result: construction of conservation laws}
In this section, we state Theorem \ref{CLGIEVBThm} concerning the existence of some generalized isometric embedding problem. We explain the main steps of the proof by setting a strategy for solving the problem in the general case (i.e., $m, n$ and $1\leqslant p \leqslant m-1$ are arbitrary).

\begin{Thm}[See \cite{LCLGIEVB-Art}]\label{CLGIEVBThm}
Let $\mathbb{V}$ be a real analytic  $n$-dimensional vector bundle  over a real analytic $m$-dimensional manifold $\mathcal{M}$ endowed with a metric $g$ and a connection $\nabla$ compatible with $g$. Given a non-vanishing covariantly closed $\mathbb{V}$-valued differential $(m-1)$-form $\phi$, there exists a local isometric embedding of $\mathbb{V}$ in  $\displaystyle{\mathcal{M}\times \mathbb{R}^{n+\kappa^n_{m,m-1}}}$ over $\mathcal{M}$ where $\displaystyle{\kappa^n_{m,m-1} \geqslant (m-1)(n-1)}$ such that the image of $\phi$ is a conservation law.
\end{Thm}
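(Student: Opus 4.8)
The plan is to argue locally and to convert Question~1 for this $\phi$ into an exterior differential system to which the Cartan--K\"{a}hler machinery applies. First I would fix a $g$-orthonormal moving frame $S=(S_1,\dots,S_n)$ of $\mathbb{V}$ over an open set, so that by Proposition~\ref{PropMatriceConnexAntiSym} the connection matrix $\omega=(\omega^i_j)$ and the curvature $\Omega=(\Omega^i_j)$ of $\nabla$ are skew-symmetric. Writing $\phi=\sum_i\phi^iS_i$ with scalar $(m-1)$-forms $\phi^i$, the hypothesis $\mathrm{d}_\nabla\phi=0$ becomes $\mathrm{d}\phi^i+\sum_j\omega^i_j\wedge\phi^j=0$. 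A local isometric embedding $\Psi$ of $\mathbb{V}$ into $\mathcal{M}\times\mathbb{R}^{N}$, with $N=n+\kappa$, is the same datum as an $\mathrm{O}(N)$-valued map $A$ on $\mathcal{M}$ whose first $n$ columns are $\Psi S_1,\dots,\Psi S_n$: completing these to the full orthonormal frame of the trivial bundle, the flat induced connection form is $\widetilde\omega=A^{-1}\mathrm{d}A$, and I denote its off-diagonal ``second fundamental form'' block by $\beta^c_i:=\widetilde\omega^c_i$ for $c>n$. The embedding realizes the given $\nabla$ precisely when the upper-left $n\times n$ block of $\widetilde\omega$ equals $\omega$, and, substituting $\mathrm{d}\phi^i=-\sum_j\omega^i_j\wedge\phi^j$, a short computation shows that the connection-matching terms cancel and $\mathrm{d}\Psi(\phi)=0$ reduces to the constraints
\begin{equation}\label{eq:cons}
\sum_{i=1}^{n}\beta^c_i\wedge\phi^i=0,\qquad c=n+1,\dots,N.
\end{equation}
Thus it suffices to produce $1$-forms $\beta^c_i$ inducing the prescribed connection and satisfying (\ref{eq:cons}).

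Next I would encode this on $\mathcal{M}\times\mathrm{O}(N)$ as an exterior ideal $\mathcal{I}$. Let $\theta=(\theta^a_b)$ be the pull-back of the Maurer--Cartan form of $\mathrm{O}(N)$; take $\mathcal{I}$ generated by the $1$-forms $\alpha^i_j:=\theta^i_j-\omega^i_j$ for $1\le i<j\le n$ together with the $\kappa$ top-degree forms $\gamma^c:=\sum_i\theta^c_i\wedge\phi^i$, and close it under $\mathrm{d}$. Because $\theta$ obeys the Maurer--Cartan equation $\mathrm{d}\theta+\theta\wedge\theta=0$, differentiating $\alpha^i_j$ and reducing modulo $\mathcal{I}$ yields, by the computation underlying Theorem~\ref{ThFormConnexFormCourbure}, the Gauss $2$-forms $\sum_{c>n}\theta^c_i\wedge\theta^c_j-\Omega^i_j$; hence every integral element automatically satisfies the Gauss equation, while the Codazzi and Ricci identities hold for free since $\widetilde\omega$ is a genuine Maurer--Cartan pull-back. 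The data being real analytic, $\mathcal{I}$ is an analytic differential ideal containing no functions, and an $m$-dimensional integral manifold on which the coframe $\eta=(\eta^1,\dots,\eta^m)$ of $\mathcal{M}$ stays independent is exactly the graph of a solution $A$; the first $n$ columns of $A$ then recover the embedding $\Psi$.

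I would then apply the Cartan--K\"{a}hler theory. At a chosen base point I would first exhibit one $m$-dimensional integral element $E$, i.e.\ solve simultaneously the quadratic Gauss equations $\Omega^i_j=\sum_{c}\beta^c_i\wedge\beta^c_j$ and the linear ``trace'' constraints coming from (\ref{eq:cons}); then build an integral flag $(0)\subset E_1\subset\cdots\subset E_m=E$. Using Proposition~\ref{PropCp} I would expand the generators in an adapted coframe to read off the polar $1$-forms $\pi^J_\rho$ and hence the Cartan characters $C_0,\dots,C_{m-1}$, and independently compute $\dim\mathcal{V}_m(\mathcal{I})$ near $E$. Cartan's test (Theorem~\ref{TestCartan}) holds once the codimension of $\mathcal{V}_m(\mathcal{I})$ equals $C_0+\dots+C_{m-1}$, making the flag ordinary; Corollary~\ref{CorCartanKahler} then produces an analytic integral manifold through the point and tangent to $E$. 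Shrinking if necessary so that it projects diffeomorphically onto a neighbourhood of $\mathcal{M}$, this manifold delivers the $\beta^c_i$, hence $A$ and the desired $\Psi$ with $\mathrm{d}\Psi(\phi)=0$.

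The main obstacle is the involutivity analysis in the third step: one must both exhibit an explicit seed element $E$ and verify that the Cartan characters sum to the codimension of $\mathcal{V}_m(\mathcal{I})$. This forces a careful choice of the flag ordering and of the normal frame so that the polar equations $\pi^J_\rho$ are maximally independent, and it rests on the linear algebra of the coupled Gauss-plus-conservation system: realizing an arbitrary skew curvature $\Omega$ by a sum of decomposable $2$-forms while respecting (\ref{eq:cons}) is possible only with sufficiently many normal directions, which is exactly the origin of the bound $\kappa\ge(m-1)(n-1)$. Here the non-vanishing of $\phi$ is essential, since it guarantees that the constraints (\ref{eq:cons}) have constant maximal rank; a degenerate flag would miscount the characters and break Cartan's test.
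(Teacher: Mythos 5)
Your reduction of the problem to an exterior differential system is essentially the paper's own: you pass to a $g$-orthonormal frame, observe that $\mathrm{d}\Psi(\phi)=0$ splits into the connection-matching conditions $\theta^i_j=\omega^i_j$ and the normal conditions $\sum_i\theta^c_i\wedge\phi^i=0$, and you close the naive ideal by adjoining the Gauss $2$-forms $\sum_{c}\theta^c_i\wedge\theta^c_j-\Omega^i_j$. Up to the (fixable) point that you work on $\mathcal{M}\times\mathrm{O}(N)$ rather than on $\mathcal{M}\times SO(n+\kappa^n_{m,m-1})/SO(\kappa^n_{m,m-1})$ --- so that the unconstrained $\mathfrak{o}(\kappa)$-block $\theta^c_{c'}$ would pollute every dimension count unless you fix a normal-frame section, as the paper does --- this is the same EDS as $\mathcal{I}^n_{m,m-1}$.

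The genuine gap is that the entire content of the theorem sits in the step you label ``the main obstacle'' and never carry out. You need, simultaneously, (i) a seed $m$-dimensional integral element, i.e.\ coefficients $H^a_{i\lambda}$ (writing $\omega^a_i=H^a_{i\lambda}\eta^\lambda$ on the element) solving the quadratic generalized Gauss equation $H_{i\lambda}\cdot H_{j\mu}-H_{i\mu}\cdot H_{j\lambda}=\mathcal{R}^i_{j;\lambda\mu}$ together with the linear generalized Cartan identities $\sum_{\lambda}(-1)^{\lambda+1}H^a_{i\lambda}\psi^i_{\Lambda\smallsetminus\lambda}=0$, and (ii) the fact that the solution set is a smooth submanifold of the expected dimension, so that $\dim\Phi(\mathcal{Z}^n_{m,m-1})$ can be matched against $C_0+\dots+C_{m-1}$ in Cartan's test. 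The paper obtains both at once from Lemma~\ref{LemFond}: for $\kappa^n_{m,m-1}\geqslant(m-1)(n-1)$ the generalized Gauss map, restricted to the open locus where the vectors $H_{i\lambda}$ ($i\leqslant n-1$, $\lambda\leqslant m-1$) are linearly independent and satisfy the generalized Cartan identities, is a \emph{surjective submersion} onto $\mathcal{K}^n_{m,m-1}$. Surjectivity produces the seed; the submersion property gives $\dim\mathcal{Z}^n_{m,m-1}=\dim\mathbf{\Sigma}^n_{m,m-1}+\dim\mathcal{H}^n_{m,m-1}$; and only then does the character count close and the flag become ordinary. Your proposal asserts that the bound $\kappa\geqslant(m-1)(n-1)$ is ``exactly the origin'' of this solvability but offers no argument for it; since that lemma is precisely what separates the theorem from a routine invocation of Cartan--K\"ahler, the proof is not complete without it.
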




The existence Theorem \ref{CLGIEVBThm} can be applied for instance to harmonic maps. The last section of this paper is dedicated to presenting an application to energy-momentum tensors which occur e.g. in general relativity. 
\\

Our approach and strategy for solving Question 1 in the general case and for proving Theorem \ref{CLGIEVBThm} is the following: first, we reformulate the problem by means of an  exterior differential system on a manifold that have to be defined, and since all the data involved are real analytic, we use the Cartan--K\"{a}hler theory to prove the existence of integral manifolds, and hence to conclude the desired result.  The problem can be represented by the following diagram which summarizes the notations
\begin{equation}\nonumber
\begin{xy}
\xymatrix{ \mathbb{V}^{n}\ar[d]_{\mathrm{d}_{\nabla}\phi =0} \ar@^{(->}[r]^{\Psi\ \  \ \ \ }&  \mathcal{M}^{m}\times\mathbb{R}^{N}\ar[d]^{\mathrm{d}\Psi(\phi)=0}\\
 \mathcal{M}^{m} &\mathcal{M}^{m}}
\end{xy}
\end{equation}
where $N$ is an integer  that have to be defined in terms of the problem's  data: $n$, $m$ and $p$. Let us then set up a general strategy as an attempt to solve the general problem. We denote by $\kappa^n_{m,p}$ the  \textit{embedding codimension}, i.e., the dimension of the fiber extension in order to achieve the desired embedding. \\

 Let us first recast our problem by using moving frames and coframes. For convenience, we adopt the following conventions for the indices: $i,j,k = 1 , \dots , n$ are the fiber indices, $\lambda , \mu , \nu = 1, \dots , m$ are the manifold indices and $a,b,c = n+1 , \dots , n+ \kappa^n_{m,p}$ are the extension indices.  We also adopt the \mbox{Einstein} summation convention, i.e., there is a summation when the same index is repeated in high and low positions. However, we will write the sign $\sum$ and make explicit the values of the summation indices when it is necessary. Let $\eta=(\eta^1 , \dots , \eta^m)$ be a moving coframe on $\mathcal{M}$. Let $E=(E_{1}, \dots , E_{n})$ be an $g$-orthonormal  moving frame of $\mathbb{V}$. The covariantly closed $\mathbb{V}$-valued differential $p$-form $\phi \in \Gamma(\wedge^p\mathrm{T}^\ast\mathcal{M}\otimes\mathbb{V}_{n})$ can be expressed as follows:
\begin{equation}
\phi=E_{i}\phi^{i}= E_{i}\psi^{i}_{\lambda_{1}, \dots , \lambda_{p}}\eta^{\lambda_{1}, \dots , \lambda_{p}}
\end{equation}
where $\psi^{i}_{\lambda_{1}, \dots , \lambda_{p}}$ are functions on $\mathcal{M}$, we assume  that  $1\leqslant \lambda_{1} < \dots < \lambda_{p} \leqslant m$ in the summation and $\eta^{\lambda_{1}, \dots , \lambda_{p}}$ means $\eta^{\lambda_{1}}\wedge\dots\wedge\eta^{\lambda_{p}}$.

\begin{Def}
Let $\phi \in \Gamma(\wedge^p\mathrm{T}^\ast\mathcal{M}\otimes \mathbb{V})$  be a $\mathbb{V}$-valued differential $p$-form on $\mathcal{M}$. The generalized torsion of a connection relative to $\phi$ (or for short, a $\phi$-torsion) on a vector bundle over $\mathcal{M}$   is a $\mathbb{V}$-valued differential  $(p+1)$-form $\Theta=(\Theta^{i}):=\mathrm{d}_{\nabla}\phi$, i.e., in a local frame
\begin{equation}
\Theta = E_{i}\Theta^{i}:=E_{i}(\mathrm{d}\phi^{i}+ \eta^{i}_{j}\wedge\phi^{j}) \end{equation}
where $(\eta^{i}_{j})$ is the connection 1-form of $\nabla$ which is an $ \mathfrak{o}(n)$-valued differential 1-form (since  $\nabla$ is compatible with the metric bundle).
\end{Def}

Hence, the condition of being covariantly closed $\displaystyle{\mathrm{d}_{\nabla}\phi = 0}$ is equivalent to the fact that,  $\mathrm{d}\phi^{i}+\eta^{i}_{j}\wedge\phi^j = 0$ for all  $i = 1, \dots , n$. From the above definition, the connection $\nabla$ is then said to be $\phi$-torsion free. We also notice  that the generalized torsion defined above reduces to the standard torsion in the tangent bundle case when $\phi=E_{i}\psi^{i}_{\lambda}\eta^\lambda = E_{i}\eta^{i}$ (since $\phi=\mathrm{Id}_{\mathrm{T}\mathcal{M}}$, the functions  $\psi^{i}_{\lambda} = \delta^{i}_{\lambda}$ are the Kronecker tensors), and  the connection is Levi-Civita.\\

Let us assume that the problem has a solution. We consider the flat connection 1-form $\omega$ on the Stiefel space $SO(n+\kappa^n_{m,p})/SO(\kappa^n_{m, p})$, the $n$-adapted frames of $\mathbb{R}^{(n+\kappa^n_{m,p})}$, i.e., the set of orthonormal families of $n$ vectors $(e_{1}, \dots , e_{n})$ of $\mathbb{R}^{(n+\kappa^n_{m,p})}$ which can be completed by orthonormal $\kappa^n_{m,p}$ vectors $(e_{n+1}, \dots,e_{n+\kappa^n_{m,p}})$ to obtain an orthonormal set of $(n+\kappa^n_{m,p})$ vectors. Since we work locally, we will assume without loss of generality that we are given a cross-section $(e_{n+1},\dots ,e_{n+\kappa^n_{m,p}} )$ of the bundle fibration $SO(n+\kappa^n_{m,p})\longrightarrow SO(n+\kappa^n_{m,p}/SO(\kappa^n_{m,p})$. The flat standard 1-form of the connection $\omega$ is defined as follows: $\omega^{i}_{j} = \langle e_{i},\mathrm{d}e_{j}\rangle$ and $\omega^{a}_{i} = \langle e_{a}, \mathrm{d}e_{i}\rangle$, where $\langle,\rangle$ is the standard inner product on $\mathbb{R}^{n+\kappa^n_{m,p}}$. Notice that $\omega$ satisfies the Cartan structure equations. Suppose now that such an isometric embedding exists, then,  if $e_{i}=\Psi(E_{i})$, the condition $\mathrm{d}\Psi(\phi)=0$ yields to
\begin{equation}
e_{i}(\mathrm{d}\phi^{i} + \omega^{i}_{j}\wedge\phi^j) + e_{a}(\omega^{a}_{i}\wedge\phi^{i})=0,
\end{equation}
a condition which is satisfied  iff
\begin{equation}
\eta^{i}_{j} = \Psi^{\ast}(\omega^{i}_{j}) \quad \text{ and }\quad \Psi^{\ast}(\omega^{a}_{i})\wedge\phi^{i} = 0.
\end{equation}

The problem then turns  to  finding moving frames $(e_{1}, \dots ,  e_{n},$\linebreak $e_{n+1},
\dots ,e_{n+\kappa^n_{m,p}})$ such that  there exist  $m$-dimensional integral manifolds of the exterior ideal generated by the naive exterior differential system $\{ \omega^{i}_{j} - \eta^{i}_{j} , \omega^{a}_{i}\wedge\phi^{i}\}$  on the product manifold
\begin{equation}
\mathbf{\Sigma}^n_{m,p} = \mathcal{M}\times \frac{SO(n+\kappa^n_{m,p})}{SO(\kappa^n_{m,p})}.
\end{equation}

Strictly speaking, the differential forms live in different spaces. Indeed, one should consider the projections $\pi_{\mathcal{M}}$ and $\pi_{St}$ of $\mathbf{\Sigma}^n_{m,p}$ on $\mathcal{M}$ and the Stiefel space and consider the ideal  on $\mathbf{\Sigma}^n_{m,p}$ generated by   $\pi^\ast_{\mathcal{M}}(\eta^{i}_{j}) - \pi^\ast_{St}(\omega^{i}_{j})$  and $\pi^\ast_{St}(\omega^{a}_{i})\wedge\pi^\ast_{\mathcal{M}}(\phi^{i})$. It seems reasonable however to simply write $\{\omega^{i}_{j}-\eta^{i}_{j} , \omega^{a}_{i}\wedge\phi^{i}\}$.\\

To find integral manifolds of the naive EDS, we  would need to check  that the exterior ideal is closed under the differentiation. However, this  turns out not to be the case. The idea is then to add to the naive EDS the exterior differential of the forms that generate it and therefore, we obtain a closed one. \\

Let us notice that some objects which we are dealing with in the following,  have a geometric meaning in the tangent bundle case with a standard 1-form ($\phi = \mathrm{Id}_{\mathrm{T}\mathcal{M}}$) but not in an arbitrary vector bundle case as we noticed earlier with the notion of torsion of a connection. That leads us to notions in a generalized sense in such a way that we recover the standard notions in the tangent bundle case. First of all, the Cartan lemma,  which in the isometric embedding problem implies the symmetry of the second fundamental form,  does not hold. Consequently, we can not assure nor assume that the coefficients of the second fundamental form are symmetric as in the isometric embedding problem. In fact, we will show that these conditions should be replaced by  \textit{generalized Cartan identities} that express how coefficients of the second fundamental form are related to each other, and of course, we recover the usual symmetry in the tangent bundle case. Another difficulty is the analogue of the  Bianchi identity of the curvature tensor. We will define  \textit{generalized Bianchi identities} relative to the covariantly closed vector valued differential $p$-form and a \textit{generalized curvature tensor space} which correspond, in the tangent bundle case, to the usual Bianchi identities and the Riemann curvature tensor space respectively. Finally, besides the \textit{generalized Cartan identities} and \textit{generalized curvature tensor space}, we will make use of a \textit{generalized Gauss map}. \\

The key to  the proof of Theorem \ref{CLGIEVBThm} is Lemma \ref{LemFond} for two main reasons: on one hand, it assures the existence of suitable coefficients of the second fundamental form that satisfy the \textit{generalized Cartan identities} and the \textit{generalized Gauss equation}, properties that simplify the computation of the Cartan characters. On the other hand, the lemma gives the minimal required \textit{embedding codimension} $\kappa^n_{m,m-1}$ that ensures the desired embedding. Using Lemma \ref{LemFond}, we give  another proof of Theorem \ref{CLGIEVBThm} by an explicit construction of an ordinary integral flag. When the existence of integral manifold is established, we just need to project it on $\mathcal{M}\times\mathbb{R}^{n+\kappa^n_{m,p}}$.

This naive EDS is not closed. Indeed, the  generalized torsion-free of the connection implies that $\mathrm{d}(\omega^{a}_{i}\wedge\phi^{i}) \equiv 0 $ modulo the naive EDS, but the second Cartan structure equation yields to  $\displaystyle{\mathrm{d}(\omega^{i}_{j} - \eta^{i}_{j}) \equiv \omega^{i}_{a}\wedge\omega^{a}_{j} + \Omega^{i}_{j}}$ modulo the naive EDS, where $\Omega = (\Omega^{i}_{j})$ is the curvature 2-form of the connection. Consequently, the exterior ideal that we now consider on $\mathbf{\Sigma}^n_{m,p}$ is  
\begin{equation}
\mathcal{I}^n_{m,p} = \{ \omega^{i}_{j} - \eta^{i}_{j} ,  \omega^{i}_{a}\wedge\omega^{a}_{j} + \Omega^{i}_{j} ,  \omega^{a}_{i}\wedge\phi^{i} \}_{\text{alg}}
\end{equation}
The curvature 2-form of the connection is an $\mathfrak{o}(n)$-valued two form and is related to the connection 1-form $(\eta^{i}_{j})$ by the Cartan structure equation:
\begin{equation}\label{CartanStructureEq}
\Omega^{i}_{j}= \mathrm{d}\eta^{i}_{j} + \eta^{i}_{k}\wedge\eta^{k}_{j}
\end{equation}
A first covariant derivative of $\phi$ has led to the generalized torsion. A second covariant derivative of $\phi$ gives rise to  \textit{generalized Bianchi identities}\footnote{In the tangent bundle case and $\phi = E_{i}\eta^{i}$, we recover the standard Bianchi identities of the Riemann curvature tensor, i.e, $\mathcal{R}^{i}_{jkl} =\mathcal{R}^{k}_{lij}$ and  $\mathcal{R}^{i}_{jkl} + \mathcal{R}^{i}_{ljk}  + \mathcal{R}^{i}_{klj}  = 0 $.} as follows:
\begin{equation}
\mathrm{d}_{\nabla}^2(\phi) = 0 \Longleftrightarrow \Omega^{i}_{j}\wedge\phi^{j} = 0 \text{ for all } i = 1, \dots , n.
\end{equation}
The conditions  $\Omega^{i}_{j}\wedge \phi^{j}=0$ for all $i = 1, \dots , n$ are called \textit{generalized Bianchi identities}. We then define  a generalized curvature tensor space $\mathcal{K}^n_{m,p}$ as the space of curvature tensor satisfying the \textit{generalized Bianchi identities}:
\begin{equation}
\mathcal{K}^n_{m,p}=\{ (\mathcal{R}^{i}_{j;\lambda \mu}) \in \wedge^2(\mathbb{R}^n)\otimes\wedge^2(\mathbb{R}^m) | \, \Omega^{i}_{j}\wedge\phi^{j} = 0, \forall i=1,\dots, n\}
\end{equation}
where $\Omega^{i}_{j} = \frac{1}{2}\mathcal{R}^{i}_{j;\lambda\mu}\eta^\lambda\wedge\eta^\mu$. \\

In the tangent bundle case,  $\mathcal{K}^n_{n,1}$ is the Riemann curvature tensor space which is of dimension $m^2(m^2-1)/12$. 
\\


All the data are analytic, we can then apply the Cartan--K\"{a}hler theory if we are able to check the involution of  the exterior differential system by constructing an $m$-integral flag: If the exterior ideal $\mathcal{I}^n_{m,p}$ satisfies the Cartan test, the flag is then ordinary and by the Cartan--K\"{a}hler theorem, there exist integral manifolds of $\mathcal{I}^n_{m,p}$. To be able to project the product manifold $\mathbf{\Sigma}^n_{m,p}$ on $\mathcal{M}$, we  also need to show the existence of $m$-dimensional integral manifolds on which the volume form on $\eta^{1,\dots , m}$ on  $\mathcal{M}$ does not vanish. 
\\ 
 
Let us express the  $1$-forms $\omega^{a}_{i}$ in  the coframe $(\eta^1, \dots , \eta^m)$ in order to later make  the computation of Cartan characters easier. Let $\mathcal{W}^n_{m,p}$ be an $\kappa^n_{m,p}$-dimensional Euclidean space. One can think of it as a normal space.  We then write $\omega^{a}_{i} = H^a_{i\lambda}\eta^{\lambda}$ where $H^{a}_{i\lambda}\in \mathcal{W}^n_{m,m-1}\otimes \mathbb{R}^n\otimes\mathbb{R}^m$ and define the forms $\pi^{a}_{i} = \omega^{a}_{i}  - H^a_{i\lambda}\eta^{\lambda}$. We can also consider $H_{i\lambda}= (H^{a}_{i\lambda})$ as a vector of $\mathcal{W}^{n}_{m,p}$. The forms that generate algebraically $\mathcal{I}^n_{m,p}$ are then expressed as follows:
\begin{equation}\label{GaussGenExpression}
\begin{split}
\sum_{a}\omega^{a}_{i}\wedge\omega^{a}_{j}-\Omega^{i}_{j}& = \sum_{a}\pi^{a}_{i}\wedge\pi^{a}_{i} +\sum_{a} (H^{a}_{j\lambda} \pi^{a}_{i}- H^{a}_{i\lambda}\pi^{a}_{j})\wedge\eta^\lambda \\
& \hspace*{3ex}+\underbrace{\frac{1}{2}(H_{i\lambda}.H_{j\mu} - H_{i\mu}.H_{j\lambda} - \mathcal{R}^{i}_{j;\lambda\mu})}_{*}\eta^\lambda\wedge\eta^\mu
\end{split}
\end{equation}
and
\begin{equation}\label{CartanGenExpression}
\omega^{a}_{i}\wedge\phi^{i}= \psi^{i}_{\lambda_{1}\dots\lambda_{p}}\pi^{a}_{i}\wedge\eta^{\lambda_{1}\dots \lambda_{p}} + \hspace{-1.2cm}\sum_{\tiny{\begin{array}{c}\lambda = 1,\dots , m \\1\leqslant \mu_{1}<\dots <\mu_{p}\leqslant m\end{array}}}\hspace{-1.2cm}\overbrace{H^{a}_{i\lambda}\psi^{i}_{\mu_{1}, \dots , \mu_{p}}}^{**}\eta^{\lambda\mu_{1}\dots\mu_{p}}
\end{equation}

These new expressions of the forms in terms of vectors $H$ and the differential $1$-form $\pi$ will help us  to compute the Cartan characters of an $m$-integral flag. To simplify these calculations, we will choose $H^{a}_{i\lambda}$, which are the coefficients of the second fundamental form, so that the quantities marked with $(*)$ and $(**)$ in the equation (\ref{GaussGenExpression}) and (\ref{CartanGenExpression}) vanish and hence:
\begin{eqnarray}
\label{GenGaussEq}H_{i\lambda}.H_{j\mu} - H_{i\mu}.H_{j\lambda}= \mathcal{R}^{i}_{j;\lambda\mu} \quad \text{generalized Gauss equation }\\
\label{GenCartanId}\sum_{\tiny{\begin{array}{c}\lambda = 1,\dots , m \\1\leqslant \mu_{1}<\dots <\mu_{p}\leqslant m\end{array}}}\hspace{-1.2cm}H^{a}_{i\lambda}\psi^{i}_{\mu_{1}, \dots , \mu_{p}}\eta^{\lambda\mu_{1}\dots\mu_{p}} =0\quad \text{generalized Cartan identities }
\end{eqnarray}

As mentioned previously in the introduction, the system of equations (\ref{GenCartanId}) is said to be \textit{generalized Cartan identities} because it gives us  relations between the coefficients of the second fundamental form which are not necessarily the usual symmetry given by the Cartan lemma. These properties of the coefficients and the fact that  the curvature tensor $(\mathcal{R}^{i}_{j;\lambda\mu})$ satisfies generalized Bianchi identities lead us to name the equation (\ref{GenGaussEq}) as the \textit{generalized Gauss equation}. 

We now  define a \textit{generalized Gauss map} $\mathcal{G}^n_{m,p}:\mathcal{W}^n_{m,p}\otimes \mathbb{R}^n\otimes\mathbb{R}^m \longrightarrow \mathcal{K}^n_{m,p}$ defined for $H^{a}_{i\lambda}\in \mathcal{W}^n_{m,p}\otimes \mathbb{R}^n\otimes\mathbb{R}^m $ by 
\begin{equation}
\Big(\mathcal{G}^n_{m,p}(H)\Big)^{i}_{j;\lambda\mu}= \sum_{a}(H^{a}_{i\lambda}H^{a}_{j\mu} - H^{a}_{i\mu}H^{a}_{j\lambda})
\end{equation}

Let us specialize in the conservation laws case, i.e., when $p=m-1$. We adopt the following notations: $\Lambda=(1,2,\dots, m)$ and $\Lambda\smallsetminus k = (1,\dots , k-1 ,k+1 ,\dots , m)$. We have thus  $\eta^{\Lambda} = \eta^1\wedge\dots\wedge\eta^m$ and $\eta^{\Lambda \smallsetminus k}= \eta^1\wedge\dots\wedge\eta^{k-1}\wedge\eta^{k+1}\dots\wedge\eta^m$. Let us construct an ordinary $m$-dimensional integral element of the exterior ideal $\mathcal{I}^n_{m,m-1}$ on $\mathbf{\Sigma}^n_{m,m-1}$. Generalized Bianchi identities are trivial in this case and so $\displaystyle{\text{dim}\,\mathcal{K}^n_{m,m-1} = \frac{n(n-1)}{2}\frac{m(m-1)}{2}}$.

The generalized Gauss equation is $H_{i\lambda}.H_{j\mu} - H_{i\mu}.H_{j\lambda} = \mathcal{R}^{i}_{j;\lambda\mu}$, where $H_{i\lambda}$ is viewed as a vector of the $\kappa^n_{m,m-1}$-Euclidean space $\mathcal{W}^{n}_{m,m-1}$. Generalized Cartan identities are
\begin{equation}
\sum_{\lambda = 1,\dots , m}(-1)^{\lambda+1} H^{a}_{i\lambda}\psi^{i}_{\Lambda \smallsetminus \lambda} = 0 \qquad \text{for all } a 
\end{equation}

The following lemma was proved in \cite{LCLGIEVB-Art} and is the key to the proof of  Theorem \ref{CLGIEVBThm} .

\begin{Lem}\label{LemFond}
Let $\kappa^n_{m,m-1}\geqslant (m-1)(n-1)$. Let $\mathcal{H}^n_{m,m-1} \subset \mathcal{W}^n_{m,m-1}\otimes \mathbb{R}^n\otimes\mathbb{R}^m $ be the open set consisting of those elements $H=(H^a_{i\lambda})$ so that the vectors $\{ H_{i\lambda} | i=1,\dots , n-1  \text{ and } \lambda = 1, \dots , m-1\}$ are linearly independents as elements of $\mathcal{W}^n_{m,m-1}$ and satisfy  generalized Cartan identities. Then 
$ \mathcal{G}^n_{m,m-1} : \mathcal{H}^n_{m,m-1} \longrightarrow \mathcal{K}^n_{m,m-1}$ is a surjective submersion.
\end{Lem}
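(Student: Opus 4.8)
The plan is to treat this as a linear-algebra statement about the quadratic map $\mathcal{G}=\mathcal{G}^n_{m,m-1}$, proving first that its differential is everywhere onto (the submersion part) and then that the map itself is onto.

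First I would record the ambient structure. Since $p=m-1$ the generalized Bianchi identities are vacuous, so $\mathcal{K}^n_{m,m-1}=\wedge^2(\mathbb{R}^n)\otimes\wedge^2(\mathbb{R}^m)$ is the full space of tensors $(T^i_{j;\lambda\mu})$ antisymmetric in $(i,j)$ and in $(\lambda,\mu)$. Writing $v^i_\lambda:=(-1)^{\lambda+1}\psi^i_{\Lambda\smallsetminus\lambda}$, the generalized Cartan identities read $\sum_{i,\lambda}v^i_\lambda H_{i\lambda}=0$ in $\mathcal{W}^n_{m,m-1}$, that is, a \emph{single} $\mathcal{W}$-valued linear relation on $H$. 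Hence the defining conditions of $\mathcal{H}^n_{m,m-1}$ amount to one linear (closed) constraint together with the (open) linear independence of the block $G=\{(i,\lambda):i\le n-1,\ \lambda\le m-1\}$. Thus $\mathcal{H}^n_{m,m-1}$ is open in the linear subspace $L=\{H:\sum v^i_\lambda H_{i\lambda}=0\}$, and at every $H\in\mathcal{H}^n_{m,m-1}$ the tangent space is $L$ itself. Because the whole set-up ($\mathcal{G}$, $\mathcal{K}^n_{m,m-1}$, and the Cartan relation) is $O(n)\times O(m)$-equivariant, I would first rotate the orthonormal frames of $\mathbb{V}$ and of $\mathcal{M}$ so that $\phi$ (equivalently $v$) is put in a position adapted to $G$; the crucial feature to arrange is that the ``corner'' coefficient $v^n_m$ vanishes.

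For the submersion claim I would fix $H\in\mathcal{H}^n_{m,m-1}$, differentiate $\mathcal{G}(H)^i_{j;\lambda\mu}=H_{i\lambda}\cdot H_{j\mu}-H_{i\mu}\cdot H_{j\lambda}$, and prove that $\mathrm{d}\mathcal{G}_H\colon L\to\mathcal{K}^n_{m,m-1}$ is onto by showing its adjoint is injective. Using the double antisymmetry of $T$, a short computation shows $\langle \mathrm{d}\mathcal{G}_H(K),T\rangle$ equals, up to a constant, $\sum_{i,\lambda}K_{i\lambda}\cdot A_{i\lambda}$ with $A_{i\lambda}:=\sum_{j,\mu}T^i_{j;\lambda\mu}H_{j\mu}$, and this vanishes for all $K\in L$ exactly when $A_{i\lambda}=v^i_\lambda c$ for some fixed $c\in\mathcal{W}^n_{m,m-1}$. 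The task is therefore to deduce $T=0$ from
\[
\sum_{j,\mu}T^i_{j;\lambda\mu}H_{j\mu}=v^i_\lambda c \qquad (\text{all } i,\lambda).
\]
Here the linear independence enters: in the equation indexed by the corner $(i,\lambda)=(n,m)$, antisymmetry forces $j\le n-1$ and $\mu\le m-1$, so its left-hand side involves only the independent vectors $\{H_{j\mu}\}_{(j,\mu)\in G}$, while its right-hand side is $v^n_m c=0$ after the normalization; independence then kills the first family $T^n_{j;m\mu}=0$ for $(j,\mu)\in G$. Feeding this back removes the ``bad'' vectors (those with a fibre index $n$ or a coframe index $m$) from the next equations, and I would iterate. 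The subtlety is that the common vector $c$ couples distinct components, so one must simultaneously track which components of $T$ die at each stage and pin down $c$ (e.g. by contracting the whole system against $v^i_\lambda$). I expect \emph{closing this bootstrapping} — showing that the single Cartan relation together with the independence of $G$ forces every component of $T$ to vanish — to be the main obstacle and the technical heart of the lemma.

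Finally, for surjectivity I would exploit that $\mathcal{G}$ is positively homogeneous of degree two, $\mathcal{G}(tH)=t^2\mathcal{G}(H)$, and that $\mathcal{H}^n_{m,m-1}$ is invariant under $H\mapsto tH$ for $t\neq0$; hence, by the submersion property, $\mathcal{G}(\mathcal{H}^n_{m,m-1})$ is an open cone. It then suffices to exhibit one ``flat'' element $H_0\in\mathcal{H}^n_{m,m-1}$ with $\mathcal{G}(H_0)=0$: take the block $\{H_{i\lambda}\}_{(i,\lambda)\in G}$ orthonormal (possible since $\kappa^n_{m,m-1}\ge(m-1)(n-1)$) and choose the remaining vectors to fulfil the Cartan relation while keeping all curvature components zero. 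Then $0$ lies in the interior of the image, and homogeneity propagates this along every ray, giving $\mathcal{G}(\mathcal{H}^n_{m,m-1})=\mathcal{K}^n_{m,m-1}$.
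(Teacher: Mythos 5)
The survey you were given does not actually prove Lemma~\ref{LemFond}: it states it and refers to \cite{LCLGIEVB-Art} for the proof, so your proposal has to stand on its own. Much of your setup is sound and matches the intended framework: for $p=m-1$ the generalized Bianchi identities are indeed vacuous, so $\mathcal{K}^n_{m,m-1}$ is the full space $\wedge^2(\mathbb{R}^n)\otimes\wedge^2(\mathbb{R}^m)$; the generalized Cartan identities are a single $\mathcal{W}^n_{m,m-1}$-valued linear relation, so $\mathcal{H}^n_{m,m-1}$ is open in a linear subspace $L$ of codimension $\kappa^n_{m,m-1}$ with tangent space $L$; your adjoint computation $\langle\mathrm{d}\mathcal{G}_H(K),T\rangle=4\sum_{i,\lambda}K_{i\lambda}\cdot A_{i\lambda}$ with $A_{i\lambda}=\sum_{j,\mu}T^i_{j;\lambda\mu}H_{j\mu}$ is correct, as is the reduction to showing that $A_{i\lambda}=v^i_\lambda c$ forces $T=0$; and the degree-two homogeneity argument (image open and containing $0$, hence everything) is a legitimate way to pass from the submersion property to surjectivity.

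There are, however, three genuine gaps, and the first is the heart of the lemma. (i) You explicitly defer the bootstrapping that deduces $T=0$ from $\sum_{j,\mu}T^i_{j;\lambda\mu}H_{j\mu}=v^i_\lambda c$. Your first iteration only kills the components with one fiber index equal to $n$ \emph{and} one base index equal to $m$; the remaining equations couple the surviving components of $T$ both to the unknown vector $c$ and to the unconstrained vectors $H_{n\mu}$, $H_{jm}$, $H_{nm}$, which in the critical case $\kappa^n_{m,m-1}=(m-1)(n-1)$ lie in the span of the independent block, so the mechanism ``independence kills coefficients'' no longer applies directly. Without closing this, nothing is proved. (ii) The normalization $v^n_m=0$ is not a free move: rotating the frames changes which block $\{H_{i\lambda}\mid i\leqslant n-1,\ \lambda\leqslant m-1\}$ is assumed independent, so you cannot fix $H\in\mathcal{H}^n_{m,m-1}$ and then rotate. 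Nor can the normalization be dropped: for $n=m=2$, $\kappa=1$, $\phi=-E_2\eta^1$ (so $v^2_2=1$ and the other $v^i_\lambda$ vanish), the point with $H_{11}\neq0$ and $H_{12}=H_{21}=H_{22}=0$ lies in $\mathcal{H}^2_{2,1}$ as defined, yet $\mathrm{d}\mathcal{G}_H(K)=H_{11}K_{22}$ vanishes identically on $L=\{K_{22}=0\}$, so the differential is not onto there. Hence the statement only holds once the frame is adapted to $\phi$ \emph{before} $\mathcal{H}^n_{m,m-1}$ is defined, and your proof must make that order of quantifiers explicit rather than invoke equivariance after the fact. (iii) The flat element $H_0$ with $\mathcal{G}^n_{m,m-1}(H_0)=0$, independent block, \emph{and} the Cartan relation is asserted, not built: when $\kappa^n_{m,m-1}=(m-1)(n-1)$ the orthonormal block already spans $\mathcal{W}^n_{m,m-1}$, so the remaining vectors needed to absorb $\sum_{i\leqslant n-1,\lambda\leqslant m-1}v^i_\lambda H_{i\lambda}$ must be placed inside that span without creating curvature, which requires an actual construction.
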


Let $\mathcal{Z}^n_{m,m-1}=\{ (x, \Upsilon, H) \in \mathbf{\Sigma}^n_{m,m-1}\times \mathcal{H}^n_{m,m-1} \, |\, \mathcal{G}^n_{m,m-1}(H)=\mathcal{R}(x) \}$. We conclude from Lemma \ref{LemFond} that $\mathcal{Z}^n_{m,m-1}$ is a submanifold\footnote{$\mathcal{Z}^n_{m,m-1}$ is the fiber of $\mathcal{R}$ by a submersion. The surjectivity of $\mathcal{G}^n_{m,m-1}$ assures the non-emptiness.} and hence, 
\begin{equation}
\dim\, \mathcal{Z}^n_{m,m-1} = \dim \, \mathbf{\Sigma}^n_{m,m-1} + \dim \, \mathcal{H}^n_{m,m-1} 
\end{equation}
where
\begin{eqnarray}
\dim \, \mathbf{\Sigma}^n_{m,m-1} = m+\frac{n(n-1)}{2}+n\kappa^n_{m,m-1}\\
\dim\, \mathcal{H}^n_{m,m-1} =(nm-1)\kappa^n_{m,m-1} - \frac{n(n-1)m(m-1)}{4} 
\end{eqnarray}

We define the map $\Phi^n_{m,m-1}:\mathcal{Z}^n_{m,m-1}\longrightarrow \mathcal{V}_{m}(\mathcal{I}^n_{m,m-1} , \eta^\Lambda)$ which associates $(x,\Upsilon , H)\in \mathcal{Z}^n_{m,m-1}$ with  the $m$-plan on which the differential forms that generate algebraically $\mathcal{I}^n_{m,m-1}$ vanish and the volume form $\eta^\Lambda$ on $\mathcal{M}$ does not vanish. $\Phi^n_{m,m-1}$ is then an embedding and hence $\dim\, \Phi(\mathcal{Z}^n_{m,m-1})= \dim\, \mathcal{Z}^n_{m,m-1}$. In what follows, we prove that in fact $\Phi(\mathcal{Z}^n_{m,m-1})$ contains only ordinary $m$-integral elements of $\mathcal{I}^n_{m,m-1}$. Since the coefficients $H^a_{i\lambda}$ satisfy the generalized Gauss equation and generalized Cartan identities, the differential  forms that generate the exterior ideal $\mathcal{I}^n_{m,m-1}$ are as follows:
\begin{eqnarray}
\omega^{i}_{a}\wedge\omega^{a}_{j}+\Omega^{i}_{j} = \sum_{a}\pi^{a}_{i}\wedge\pi^{a}_{i} +\sum_{a} (H^{a}_{j\lambda} \pi^{a}_{i}- H^{a}_{i\lambda}\pi^{a}_{j})\eta^\lambda \\
\omega^{a}_{i}\wedge\phi^{i}= \psi^{i}_{\lambda_{1}\dots\lambda_{p}}\pi^{a}_{i}\wedge\eta^{\lambda_{1}\dots \lambda_{p}}
\end{eqnarray}

The final step is then computing the Cartan characters and checking by the Cartan's test that $\Phi(\mathcal{Z}^n_{m,m-1})$ contains only ordinary $m$-integral flags. The Cartan--K\"{a}hler theorem then assures the existence of an $m$-integral manifold on which $\eta^\Lambda$ does not  vanish since the exterior ideal is in involution. We finally project the integral manifold on $\mathcal{M}\times \mathbb{R}^{n+\kappa}$. Let us notice that the requirement of the non-vanishing of  the volume form  $\eta^\Lambda$ on the integral manifold yields to project the integral manifold on $\mathcal{M}$ and also to view it as a graph of a function $f$ defined on $\mathcal{M}$ with values in the space of $n$-adapted orthonormal frames of $\mathbb{R}^{n+\kappa}$. In  the isometric embedding problem, the composition of $f$ with the projection of the frames on the Euclidean space is by construction the  isometric embedding map.

\section[Conservation laws ...]{Conservation laws for covariant divergence-free energy-momentum tensors}

We present here an application for Theorem \ref{CLGIEVBThm} to covariant divergence free energy-momentum tensors. 
\begin{Prop}
Let $(\mathcal{M}^{m}, g)$ be a real analytic $m$-dimensional Riemannian manifold, $\nabla$ be the Levi-Civita connection and $T$ be a contravariant 2-tensor with a vanishing covariant divergence. Then  there exists a conservation law of $T$ on $\mathcal{M}\times\mathbb{R}^{m+(m-1)^2}$.
\end{Prop}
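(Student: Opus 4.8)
The plan is to exhibit $T$ as an instance of the datum $\phi$ in Theorem~\ref{CLGIEVBThm}, taking for $\mathbb{V}$ the tangent bundle itself. Thus I set $\mathbb{V}=\mathrm{T}\mathcal{M}$, so that the rank is $n=m$, keep the metric $g$, and let $\nabla$ be the Levi-Civita connection, which is compatible with $g$. Writing $T=T^{\mu\nu}\,\partial_\mu\otimes\partial_\nu$, I associate to $T$ the $\mathrm{T}\mathcal{M}$-valued differential $(m-1)$-form
\begin{equation}\nonumber
\phi=\partial_\nu\otimes\phi^\nu,\qquad \phi^\nu=W_\nu\lrcorner\,\mathrm{vol}_{\mathcal{M}},\qquad W_\nu=T^{\mu\nu}\,\partial_\mu,
\end{equation}
so that one index of $T$ is contracted into the volume form to yield an $(m-1)$-form while the other index becomes the bundle index; equivalently $\phi\in\Gamma(\wedge^{m-1}\mathrm{T}^\ast\mathcal{M}\otimes\mathrm{T}\mathcal{M})$. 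As soon as $T$ does not vanish, $W_\nu\neq 0$ for some $\nu$ and hence $\phi$ is a non-vanishing $\mathrm{T}\mathcal{M}$-valued $(m-1)$-form, as required by the hypotheses of Theorem~\ref{CLGIEVBThm}.

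The crux is to verify that $\phi$ is covariantly closed precisely when $T$ has vanishing covariant divergence. I would compute the generalized torsion $\mathrm{d}_{\nabla}\phi=\partial_\nu\otimes(\mathrm{d}\phi^\nu+\eta^\nu_\rho\wedge\phi^\rho)$ using two elementary facts: the Riemannian divergence identity $\mathrm{d}(W\lrcorner\,\mathrm{vol}_{\mathcal{M}})=(\mathrm{div}\,W)\,\mathrm{vol}_{\mathcal{M}}$, which controls $\mathrm{d}\phi^\nu$, and the contraction identity $\beta\wedge(W\lrcorner\,\mathrm{vol}_{\mathcal{M}})=\beta(W)\,\mathrm{vol}_{\mathcal{M}}$ for a $1$-form $\beta$, which turns $\eta^\nu_\rho\wedge\phi^\rho$ into $\eta^\nu_\rho(W_\rho)\,\mathrm{vol}_{\mathcal{M}}$. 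Collecting the two contributions reproduces exactly the components of the covariant divergence,
\begin{equation}\nonumber
\mathrm{d}\phi^\nu+\eta^\nu_\rho\wedge\phi^\rho=\big(\nabla_\mu T^{\mu\nu}\big)\,\mathrm{vol}_{\mathcal{M}},
\end{equation}
so that $\mathrm{d}_{\nabla}\phi=0$ if and only if $\nabla_\mu T^{\mu\nu}=0$ for every $\nu$, i.e. iff $T$ is covariant divergence-free. In a coordinate frame the cancellation of the last terms uses the symmetry of the Levi-Civita connection coefficients; in a $g$-orthonormal frame the same computation goes through with $\eta=(\eta^\nu_\rho)$ the $\mathfrak{o}(m)$-valued connection $1$-form of $\nabla$.

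With these choices every hypothesis of Theorem~\ref{CLGIEVBThm} is met: $\mathbb{V}=\mathrm{T}\mathcal{M}$ is a real analytic rank-$m$ bundle over the real analytic manifold $\mathcal{M}$, $\nabla$ is compatible with $g$, and $\phi$ is a non-vanishing covariantly closed $\mathrm{T}\mathcal{M}$-valued $(m-1)$-form. Applying the theorem with $n=m$ and $p=m-1$ produces a local isometric embedding $\Psi$ of $\mathrm{T}\mathcal{M}$ into $\mathcal{M}\times\mathbb{R}^{m+\kappa^m_{m,m-1}}$ whose image $\mathrm{d}\Psi(\phi)$ is closed, hence a conservation law for $T$. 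Because the minimal embedding codimension supplied by Lemma~\ref{LemFond} is $\kappa^m_{m,m-1}\geqslant(m-1)(n-1)=(m-1)^2$, one may take the target to be $\mathcal{M}\times\mathbb{R}^{m+(m-1)^2}$, which is the asserted dimension.

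The only genuine computation is the index bookkeeping of the second paragraph; the remainder is a direct specialization of Theorem~\ref{CLGIEVBThm}. I expect the main subtlety to be tracking which index of $T$ is contracted into $\mathrm{vol}_{\mathcal{M}}$ and which becomes the bundle index, and confirming that the induced trace is the covariant divergence on the intended slot with no surviving torsion term --- this is where metric compatibility and the torsion-freeness of $\nabla$ are used. For a symmetric $T$, as for energy-momentum tensors, the two slots play identical roles and this ambiguity disappears.
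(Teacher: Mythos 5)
Your proposal is correct and follows essentially the same route as the paper: you associate to $T$ the $\mathrm{T}\mathcal{M}$-valued $(m-1)$-form obtained by contracting one slot of $T$ into the volume form, check that covariant closedness of this form is equivalent to $\nabla_\mu T^{\mu\nu}=0$, and then invoke Theorem~\ref{CLGIEVBThm} with $n=m$, $p=m-1$ and $\kappa^m_{m,m-1}=(m-1)^2$. The paper carries out the same divergence computation in an orthonormal frame via Christoffel symbols rather than via the identities $\mathrm{d}(W\lrcorner\,\mathrm{vol})=(\mathrm{div}\,W)\,\mathrm{vol}$ and $\beta\wedge(W\lrcorner\,\mathrm{vol})=\beta(W)\,\mathrm{vol}$, but this is only a cosmetic difference.
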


\begin{proof}
Let us consider a contravariant 2-tensor  $T\in \Gamma(\mathrm{T}\mathcal{M}\otimes \mathrm{T}\mathcal{M})$ which  is expressed in a chart by $T=T^{ij}\xi_{i}\otimes\xi_{j}$, where $(\xi_{1} , \dots , \xi_{m})$ is the dual basis of an orthonormal moving coframe $(\eta^1 , \dots , \eta^m)$. The volume form  is denoted by $\eta^{I} = \eta^1\wedge\dots \wedge\eta^m$. Using the interior product, we can associate  any contravariant 2-tensor  $T$ with a $\mathrm{T}\mathcal{M}$-valued $m$-differential form $\tau$ defined as follows:
\begin{equation*}
\begin{split}
\Gamma (\mathrm{T}\mathcal{M} \otimes \mathrm{T}\mathcal{M}) &\longrightarrow \Gamma (\mathrm{T}\mathcal{M}\otimes \wedge^{(m-1)}\mathrm{T}^\ast \mathcal{M}).\\
T=T^{ij}\xi_{i}\otimes\xi_{j} & \longmapsto \tau = \xi_{i}\otimes \tau^{i}
\end{split}
\end{equation*}
where $\displaystyle{ \tau^{i} = T^{ij}(\xi_{j} \lrcorner \eta^{I})}$

The tangent space $\mathrm{T}\mathcal{M}$ is endowed with the Levi-Civita connection $\nabla$. Let us compute the covariant derivative of $\tau$. 
\begin{equation}
\mathrm{d}_{\nabla}\tau = \xi_{i}\otimes (\mathrm{d}\tau^{i} + \eta^{i}_{j}\wedge\tau^j)
\end{equation}
 On one hand, using the first Cartan equation that expresses the vanishing of the torsion of the Levi-Civita connection and the expression of the Christoffel symbols\footnote{The Christoffel symbols are the functions $\Gamma$ defined by: $\eta^{i}_{j} = \Gamma^{i}_{kj}\eta^k.$} in term of the connection 1-form, we obtain
\begin{equation}
\begin{split}
\mathrm{d}\tau^i &= \mathrm{d}\Big(T^{ij}(\xi_{j}\lrcorner \eta^{I})\Big) = \mathrm{d}(T^{ij})\wedge(\xi_{j}\lrcorner \eta^{I})+ T^{ij}\mathrm{d}(\xi_{j}\lrcorner \eta^{I})\\
&= \Big(\xi_{j}(T^{ij}) + T^{ij}\Gamma^{k}_{kj}\Big)\eta^{I} 
\end{split}
\end{equation}
and
\begin{equation}
\eta^{i}_{j}\wedge\tau^{j} = \eta^{i}_{j}\wedge T_{jk}(\xi_{k}\lrcorner\eta^{I})= \Big( T^{jk}\Gamma^{i}_{kj}\Big)\eta^{I}
\end{equation}
consequently
\begin{equation}
\mathrm{d}_{\nabla}\tau = \xi_{i}\otimes\Big[\Big(\xi_{j}(T^{ij}) + T^{ij}\Gamma^{k}_{kj}+ T^{jk}\Gamma^{i}_{kj}\Big)\eta^{I} \Big]
\end{equation}
On the other hand, a straightforward computation of the divergence of the contravariant 2-tensor leads us to
\begin{equation}
\nabla_{j}T^{ij} = \xi_{j}(T^{ij}) + T^{ij}\Gamma^{k}_{kj}+ T^{jk}\Gamma^{i}_{kj}
\end{equation}
 We conclude then that 
 \begin{equation}
\mathrm{d}_{\nabla}\tau = 0 \Leftrightarrow \nabla_{j}T^{ij}=0
\end{equation}

Hence, for an $m$-dimensional Riemannian manifold $\mathcal{M}$, the main result of this article assures the existence of an isometric embedding $\Psi: \mathrm{T}\mathcal{M} \longrightarrow \mathcal{M}\times \mathbb{R}^{m+(m-1)^2}$ such that $\mathrm{d}(\Psi (\tau))=0$ is a conservation law for a  covariant divergence-free energy-momentum tensor. 
\end{proof}

For instance, if $\mathrm{dim}\mathcal{M} = 4$, then $\Psi(\tau)$ is a closed differential $3$-form on $\mathcal{M}$ with values in $\mathbb{R}^{13}$.


\end{document}